 \newtheorem{theorem}{Theorem}[section]
 \newtheorem{lemma}[theorem]{Lemma}
 \newtheorem{proposition}[theorem]{Proposition}
 \theoremstyle{definition}
 \newtheorem{definition}[theorem]{Definition}
 \theoremstyle{definition}
 \newtheorem{remark}[theorem]{Remark}
 \theoremstyle{remark}
 \newcommand{\bel}{\begin{equation}\label}
 \newcommand{\enl}{\end{equation}}
 \newcommand{\beq}{\begin{eqnarray*}}
 \newcommand{\enq}{\end{eqnarray*}}
 \newcommand{\p}{\partial}
 \newcommand{\R}{\mathbb{R}}
 \newcommand{\N}{\mathbb{N}}
 \newcommand{\norm}[1]{\Vert#1\Vert}
 \newcommand{\abs}[1]{\left\vert#1\right\vert}
 \newcommand{\set}[1]{\left\{#1\right\}}
 \newcommand{\inner}[1]{\left(#1\right)}
 \newcommand{\com}[1]{\big[#1\big]}
 \newcommand{\reff}[1]{(\ref{#1})}
\begin{document}

\begin{frontmatter}

\title
{Regularity  of  traveling periodic stratified  water waves with vorticity}

\author{Ling-Jun Wang \corref{cor1}}

\ead{wanglingjun@wust.edu.cn}

 \address{School of Science, Wuhan University of Science and
   Technology, 430065 Wuhan, China}

\cortext[cor1]{Corresponding author. Tel:  +86 18062409592.   Fax: +86 2768893261.
}

\begin{abstract}
We prove real analyticity of all the streamlines, including the free surface, of a  steady stratified flow of water over a flat bed in the absence of stagnation points, with a H\"{o}lder continuous Bernoulli
function and a H\"{o}lder continuously differentiable  density
function. Furthermore,   we show that if the Bernoulli
function and the density function possess some Gevrey regularity of
index s, then the stream function admits the same Gevrey regularity throughout the fluid domain;
in particular if the Gevrey index s equals to 1, then we obtain analyticity of the stream function.
The regularity results hold for three distinct physical regimes: capillary, capillary-gravity, and gravity water waves.
\end{abstract}

\begin{keyword}
  Analyticity, Gevrey regularity, wave profile, stratified water wave, vorticity
\MSC[2010] 35B65, 35Q35, 76B03, 76B15
\end{keyword}

\end{frontmatter}

\section{Introduction}

We study  regularity of the streamlines, including the surface
profile, and regularity of the stream function, of stratified flows. Stratified flows are heterogeneous flows where the density varies as a function of
the streamlines \cite{Long53,Turner81,Yih60}. Stratification is a physically significant phenomenon for
certain
flows, where the fluid density may be
caused to fluctuate
 by numerous factors such as the interplay between
gravity and the salinity of the water. Recently, in \cite{Walsh1, Walsh2}, the author developed an existence theory for two-dimensional stratified steady and
periodic gravity waves, with or without surface tension. Using local and global bifurcation techniques, it was shown that, for stratified flows without stagnation points, there exist  both small and large amplitude traveling periodic waves. Even more recently, some results ensuring the
local existence of stratified waves, allowing for both surface tension and stagnation points,
were proven by the authors in \cite{HM}, extending the work of \cite{EMM}. However, in the analysis of
this paper, we will invoke the assumption of there being no stagnation points.

For homogeneous water waves, that is, waves with constant density, a series of works analyze  the a priori regularity of the streamlines, and
particularly the free-surface. For instance in the irrotational setting Lewy \cite{MR0049399} showed that irrotational waves
without stagnation points have real analytic profiles. Recent developments, proving the regularity of streamlines
for rotational flows, were initiated by Constantin and Escher in \cite{MR2753609} in the setting of homogeneous gravity waves over a
flat bed. Under the assumption that the vorticity function is  H\"{o}lder continuously differentiable, it was shown in \cite{MR2753609} that, each streamline, except the free surface, is real analytic; if further the vorticity
function is real analytic, then the free surface itself is also analytic. The arguments in \cite{MR2753609} base on
translational invariance property of the resulting elliptic operator in the direction of wave propagation, and
the celebrated result due to Kinderlehrer et al. \cite{MR531272} on regularity for elliptic free boundary problems. Later on, similar  strong a priori regularity was established for a wide variety of homogeneous regimes, see for instance \cite{MR2763714,HenryJmfm,HenryPro} for periodic gravity waves with surface tension, \cite{MaQam} for deep-water waves, \cite{MaImrn} for flows with merely bounded
vorticity, \cite{HurImrn} for solitary-water waves, and the survey article \cite{Esurvey}. In all the aforementioned works the analyticity of free surface is established under the extra assumption that the vorticity function is analytic. Quite recently, a similar result was established in \cite{LW} without the analyticity assumption on the vorticity function. Precisely, the authors in \cite{LW} showed that if the vorticity function is only
H\"{o}der continuous, all the streamlines, including the free surface,
of the steady homogeneous flow over a flat bed in the absence of stagnation points, are real analytic. The conclusions in \cite{LW} were achieved by using some a priori Schauder estimates and giving successively a quantitative bound for
each derivative of the streamlines in the H\"{o}der norm. Moreover, studied also in \cite{LW} was the case when the vorticity possesses more regularity property rather than
H\"{o}der continuity, namely Gevrey regularity of index $s$. Gevrey class is an intermediate space between
the spaces of smooth functions and analytic functions, and the Gevrey class function of index
1 is just the real-analytic function; see Subsection 2.2 below for precise definition of Gevrey class. It was shown in \cite{LW} that if the
vorticity is Gevrey regular, the stream function admits the same Gevrey regularity
in the fluid domain, up to the free surface.

For heterogeneous, or stratified, water waves, some regularity results of steady periodic
 waves without stagnation points, have been  studied recently in \cite{HMdcds}, in three distinct physical regimes, namely:
capillary, capillary-gravity, and gravity water waves. There the authors proved, for all three types of waves,
that, when the Bernoulli function is H\"{o}der
continuous and the variable density function
has a first derivative which is H\"{o}der
continuous, then the free-surface profile is the graph
of a smooth function. Furthermore, they showed that the streamlines are analytic a priori
for capillary stratified waves, whereas for gravity and capillary-gravity stratified waves the
streamlines are smooth in general, and analytic
in an unstable regime; moreover, if the
Bernoulli function and the streamline density function are both real analytic
functions then
all of the streamlines, including the wave profile, are real analytic
for all gravity, capillary, and capillary-gravity stratified waves.

In the present work we follow  the arguments in \cite{LW} for homogeneous water waves to study the regularity of the streamlines, including the surface
profile, for  stratified flows. We show, for the above three types of waves, that: 1) if the Bernoulli function is H\"{o}der
continuous and the density function
is H\"{o}der continuously differentiable, then all the streamlines,  including the wave profile, are real analytic; see Theorem \ref{thm1}; 2) if both the Bernoulli function and the  density function are in Gevrey (analytic) class, then the stream function admits the same Gevrey (analytic) regularity
in the fluid domain, up to the free surface; see Theorem \ref{thm2} stated in Subsection 2.2.

The paper is organized as follows. In Section 2 we formulate the rotational capillary-gravity stratified water-wave
problem as free boundary problem for stream function and its equivalent reformulation in a fixed
rectangular domain, and state our main regularity results. Notations and some useful inequalities
are listed. Section 3 is devoted to the proof of analyticity of streamlines including the free surface.
In Section 4 we study the Gevrey (analytic) regularity of stream function. In the last section, Section
5, we consider the traveling gravity water waves, and obtain similar regularity results
for streamlines and stream function.

\section{Preliminaries and the main results}

\subsection{The governing equations for stratified water waves}
Consider a steady two-dimensional flow of an incompressible inviscid fluid with variable density
and a steady wave on the free surface of the flow. By steady, we mean that the flow and the surface wave move at a constant speed
from left to right without changing their configuration; that is, the velocity field of flow and the surface wave exhibit an $(t,X)-$dependence
in the form of $X-ct$, where $X$ is the horizontal space variable, $c>0$ is the speed of the wave and $t$ denotes the time. Setting $x=X-ct$, we eliminate the time dependence of fluid flow and pass to the frame of reference moving with the wave.
Assume that the flow is over a flat bed $y=-d$ with $0<d<\infty$, the free surface is given by $y=\eta(x)$ which
is oscillating around the line $y = 0$, and  the liquid occupy the stationary domain
\[
  \Omega=\{(x,y)\in \R^2: -d<y<\eta(x)\}.
\]
Also, the flow is assumed to be driven by capillarity (that is, surface tension) on the surface and gravity acting on the body of the fluid.

Let
$u = u(x, y)$ and $v = v(x, y)$ denote the horizontal and vertical velocities, respectively, and let
$\tilde\rho =\tilde \rho(x, y) > 0$ be the density.
Define the \textit{(relative) pseudo-stream function} $\psi(x,y)$ by
\begin{equation}\label{def psi}
  \psi_y=\sqrt{\tilde\rho}(u-c),\quad \psi_x=-\sqrt{\tilde\rho}v.
\end{equation}
The level sets $\{(x,y):\psi(x,y)={\rm constant}\}$ are called \textit{streamlines} of the fluid motion. The above relation \reff{def psi} determines $\psi$ up to a constant. For definiteness we choose
  $\psi=0$ on the free boundary, so that $\psi=-p_0$ on $y=-d$, where $p_0<0$ is the \textit{(relative) pseudo-voumetric mass flux}:
  \begin{eqnarray*}
  p_0=\int_{-d}^{\eta(x)}\psi_y(x,y)~dy.
\end{eqnarray*}
  Since $\rho$ is transported, it must be constant on the streamlines and hence, we may think of it as a function
of $\psi$ and assume
\[
  \tilde\rho(x,y)=\rho(-\psi(x,y)),
\]
where $\rho: [p_0,0]\rightarrow \R^+$ is referred as   \textit{ streamline density function}. Finally, consider only the case where there are no stagnation points throughout the fluid domain, that is,
\begin{equation}\label{no stag}
  \psi_y(x,y)\leq -\delta<0\quad {\rm in}~~  \bar{\Omega}
\end{equation} for some $\delta>0$. Then the governing equations for the capillary-gravity stratified water wave problem are formulated as
\begin{subequations}\label{EquPsi}
\begin{eqnarray}
 \triangle\psi-gy\rho'(-\psi)=-\beta(-\psi),&\quad (x,y)\in \Omega_{\eta},\label{EquPsi1}\\
 \abs{\nabla \psi}^2+2g(y+d)\rho-2\sigma\frac{\eta_{xx}}{(1+\eta_x^2)^{\frac{3}{2}}}=Q,&\quad y=\eta(x),\label{EquPsi2}\\
  \psi=0,&\quad y=\eta(x),\label{EquPsi3}\\
  \psi=-p_0, &\quad y=-d.\label{EquPsi4}
 \end{eqnarray}
\end{subequations}
Here $\beta: [p_0,0]\rightarrow \R$ is the \textit{Bernoulli function}, $g>0$ is the gravitational constant of acceleration, $\sigma>0$ is the coefficient of surface tension, and $Q$ is  a constant related to the energy.
We refer to \cite{Walsh2} for the detailed derivation of the above system of governing equations.

The equation \reff{EquPsi1} is known as Yih's equation or the Yih-Long equation. The wave profile $\eta(x)$ represents an unknown in the problem since it is a free surface.
Note that \reff{EquPsi3} and \reff{EquPsi4} imply that the free surface and flat bed are each level sets of  $\psi$, and thus are
streamlines. The system \reff{EquPsi} with $g=0$ corresponds to the capillary stratified water waves.

\subsection{Statement of the main results}
To state our main results, we first recall the definition of Gevrey
class functions, which is an intermediate space between the spaces of
smooth functions and real-analytic functions;  see \cite{MR1249275}
for more detail.

\begin{definition}\label{def gevrey}
 Let $W$ be an open subset of $\mathbb{R}^d$ and $f$
be a real-valued  function defined on the closure $\bar W$ of $W$.  We say $f$ belongs to  Gevrey class
in $\bar {W}$ of index $s\geq 1$,  denoted by $f\in G^s(\bar W)$,   if $f\in
C^\infty(\bar W)$  and for any $x_0\in\bar W$  there exists a neighborhood
$U$ of $x_0$ such that for any $x\in U\cap\bar W$, the series
\[
   \sum_{\alpha\in\mathbb N^d}\frac{\partial^\alpha f(x_0)}{(\alpha!)^s}(x-x_0)^\alpha
\]
converges to $f(x)$.
\end{definition}

Note that $f$ is real analytic in
$\bar W$ if
$s=1$.  Observe  $ \abs\alpha!\leq 2^{\abs\alpha} \alpha!$ for any multi-index $\alpha\in\N^d$. Then as an alternative characterization for Gevrey class function,
we have $f\in G^s(\bar W)$ if    for any compact subset $K\subset\bar W$, there
exists a constant $C_K$, depending only on $K$,  such that
\begin{eqnarray*}
\forall~\alpha\in\mathbb \N^d,\quad \max_{x\in K}|\partial^\alpha{f}(x)| \leq C_K^{|\alpha|+1}(|\alpha|!)^s.
\end{eqnarray*}
In this paper we will derive a stronger estimate
than the above one,
namely,
\[
\forall~\alpha\in\mathbb N^d,\quad \max_{x\in
  \bar W}|\partial^\alpha{f}(x)| \leq
C^{|\alpha|+1}(|\alpha|!)^s.
\]

Throughout the paper let  $C^{k,\mu}(\bar W)$,
$k\in\mathbb N, \mu\in(0,1)$,   be
the  standard H\"{o}lder  space of functions $f : \bar W \rightarrow \mathbb R$ with H\"{o}lder-continuous derivatives
of exponent $\mu$ up to order $k$.      For given $p_0<0$ ,
$\beta\in C^{3,\mu}([p_0,0])$ and $\rho\in C^{1,\mu}([p_0,0])$,  the existence of periodic water waves with or without surface tension has been
established in \cite{Walsh2,Walsh1}.
 Our main result below shows that, with a H\"{o}lder continuous
Bernoulli function and a H\"{o}lder continuously differentiable density function,
each streamline can be described by the graph of some analytic function.

\begin{theorem}\label{thm1}
Let $\psi(x,y)$ be the pseudo-stream function for the boundary problem
\reff{EquPsi1}-\reff{EquPsi4} with free surface $y=\eta(x)$.  Suppose  $\beta\in
C^{0,\mu}([p_0, 0])$ and $\rho \in C^{1,\mu}([p_0, 0])$ with $p_0<0$ and $0<\mu<1$  given. Then each
streamline including the free surface $y=\eta(x)$ is a real-analytic curve.
\end{theorem}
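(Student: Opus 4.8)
The plan is to reduce the analyticity of the streamlines to a statement of \emph{partial analyticity in the horizontal variable} for a reformulated problem on a fixed domain. First I would pass to the semi-hodograph (Dubreil--Jacotin) variables $q=x$, $p=-\psi$, and introduce the height function $h(q,p)=y+d$, so that the fluid domain $\Omega$ is mapped onto the fixed strip $\mathcal{R}=\{(q,p):p_0<p<0\}$, periodic in $q$. The no-stagnation condition \reff{no stag} becomes $h_p>0$, so the transform is nondegenerate, and each streamline $\{\psi=c\}$ is exactly the graph $y=h(x,-c)-d$. Hence proving that every streamline is real-analytic is equivalent to showing that $q\mapsto h(q,p)$ is real-analytic, uniformly in $p\in[p_0,0]$. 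Under this transform, equation \reff{EquPsi1} becomes a quasilinear elliptic equation for $h$ of the schematic form $\partial_q(h_q/h_p)-\partial_p\big( (1+h_q^2)/(2h_p^2) \big)=g(h-d)\rho'(p)-\beta(p)$, the bed condition \reff{EquPsi4} becomes the Dirichlet condition $h=0$ on $p=p_0$, and the Bernoulli condition \reff{EquPsi2} becomes a nonlinear boundary condition on $p=0$ that, because of the surface-tension term, involves $h_{qq}$ and is therefore second order.

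The decisive structural observation is that the transformed operator is \emph{translation invariant in $q$}: the only coefficients of limited smoothness, namely $\beta(p)$, $\rho(p)$ and $\rho'(p)$, depend on $p$ alone and never on $q$. This is precisely what makes the hypotheses $\beta\in C^{0,\mu}$ and $\rho\in C^{1,\mu}$ sufficient; had one differentiated the original equation \reff{EquPsi1} directly in $x$, the term $gy\rho'(-\psi)$ would have produced $\rho''$, which does not exist under our assumptions. Having fixed the domain, standard Schauder theory first yields the base regularity $h\in C^{2,\mu}(\bar{\mathcal{R}})$ and, by the $q$-translation invariance, smoothness in $q$; the new content is a quantitative factorial control of the $q$-derivatives. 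Setting $w_n:=\partial_q^n h$ and differentiating the equation and the two boundary conditions $n$ times in $q$, I obtain for each $n$ a \emph{linear} elliptic boundary value problem $\mathcal{L}w_n=F_n$, where $\mathcal{L}$ is the linearization of the quasilinear operator at $h$ and $F_n$ is a commutator term assembled, via the Leibniz and Fa\`a di Bruno rules, from $w_0,\dots,w_{n-1}$ and finitely many of their $p$-derivatives; by the translation invariance $F_n$ contains no $q$-derivatives of $\beta$ or $\rho'$, so no derivative of the data beyond $\rho'(p)$ ever enters.

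The core of the argument is then a quantitative induction. Applying the Schauder estimates uniformly up to $p=0$ and $p=p_0$, I would bound $\|w_n\|_{C^{2,\mu}(\bar{\mathcal{R}})}$ in terms of $\|F_n\|_{C^{0,\mu}}$ and the differentiated boundary data, and then prove by induction the analytic-type estimate $\|w_n\|_{C^{2,\mu}(\bar{\mathcal{R}})}\le M^{n+1}\,n!$ for a suitable constant $M$. This step rests on a combinatorial lemma controlling the multilinear structure of $F_n$: each summand is a product of factors $w_j$ (with finitely many $p$-derivatives) whose indices sum to $n$, weighted by multinomial coefficients, and the factorial bound propagates through sums of the form $\sum_{j}\binom{n}{j}\,j!\,(n-j)! = (n+1)!$, the extra factor $n+1$ being absorbed into $M$. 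Once established, the bound $\|w_n\|_{C^0}\le M^{n+1}n!$ shows that for every fixed $p$ the Taylor series $\sum_n \frac{w_n(q_0,p)}{n!}(q-q_0)^n$ has a uniform positive radius of convergence and represents $h(\cdot,p)$; thus $h$ is real-analytic in $q$, and every streamline is a real-analytic curve, which is the assertion of \reff{thm1}.

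I expect the main obstacle to lie in this last induction rather than in the reformulation. Two points require genuine care: organizing $F_n$ and, above all, the differentiated \emph{second-order} Bernoulli condition on $p=0$ so that every nonlinear contribution is demonstrably of lower order and carries an explicitly controllable constant; and closing the factorial recursion \emph{uniformly up to the boundary}, where the curvature term governs the top boundary condition. The favorable feature of the capillary and capillary-gravity regimes is that this second-order boundary condition is itself regularizing; the purely gravity case $\sigma=0$ loses this mechanism and must be handled separately, as in Section 5.
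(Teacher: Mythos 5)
Your overall route is exactly the paper's: the Dubreil--Jacotin transform to the fixed strip, translation invariance in $q$, differentiation of the transformed system $m$ times in $q$, Agmon--Douglis--Nirenberg/Schauder estimates for the resulting linear problem, and an induction giving factorial bounds on $\partial_q^m h$. The gap is in the one step you yourself single out as the core: the inductive bound $\|w_n\|_{C^{2,\mu}}\le M^{n+1}\,n!$ does not propagate. Since $\binom{n}{j}\,j!\,(n-j)!=n!$ for \emph{every} $j$, the Leibniz sums you describe produce $\sum_{j=0}^{n}\binom{n}{j}\,j!\,(n-j)!=(n+1)!$, so the induction step delivers at best $\|w_n\|\le \mathcal{C}\,M^{n+2}\,(n+1)!$ with $\mathcal{C}$ the Schauder constant. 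For this to be $\le M^{n+1}\,n!$ you would need $\mathcal{C}M(n+1)\le 1$ for all $n$, which is impossible: a factor growing linearly in $n$ cannot be ``absorbed into $M$''. Running the recursion honestly, the losses compound at each order and one only obtains bounds of Gevrey-2 type, roughly $M^{n+1}(n!)^2$, which give no positive radius of convergence and hence no analyticity.

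The paper closes the induction by proving a strictly \emph{stronger} statement carrying a factorial deficit: Proposition \ref{propassu} asserts $(E_m):\ \|\partial_q^m h\|_{2,\mu}\le L^{m-1}(m-2)!$, two factorials below the naive $m!$. With this hypothesis, the mid-range part of each Leibniz sum becomes, after factoring out $L^{m-2}(m-2)!$, a sum of the type $\sum_{0<j<m}\frac{m^2}{j^3(m-j)^2}$ or $\sum_{0<j<m}\frac{m^2}{j^2(m-j)^2}$, which Lemma \ref{lemSum} bounds uniformly in $m$ (by $8\pi^2$, respectively $8\pi^2 m$ in the harmless places); the Schauder constant is then beaten simply by choosing $L$ large at the end. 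The price of the deficit is that the extreme indices (small $j$, and $j$ close to $m$, where the shifted induction hypothesis is unavailable) must be split off and estimated separately using the finite norms $\|h\|_{3,\mu}$, and the nonlinear blocks $h_q^2$, $h_ph_q$, $h_p^3$, $h_p^2(1+h_q^2)^{-3/2}$ require the dedicated multiplicative lemmas (Lemma \ref{stab} and Lemma \ref{+stab+}) to keep all constants independent of $m$. This deficit device, together with the attendant splitting of the sums, is the missing idea in your sketch; everything else in your plan matches the paper, including the separate treatment of the $\sigma=0$ case in Section \ref{sec5}.
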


The following result shows that the pseudo-stream function admits the same
regularity as the vorticity.

\begin{theorem}\label{thm2}
Let $\psi(x,y)$ be the pseudo-stream function for the boundary problem
\reff{EquPsi1}-\reff{EquPsi4} with free surface $y=\eta(x)$.  Suppose  $\beta,\rho \in
G^s([p_0, 0])$ with $p_0<0$ and $s\geq1$ given. Then  $\psi(x,y) \in
G^s(\bar \Omega )$;  in particular if $s=1$, i.e., $\beta$ and $\rho$ are analytic in $[p_0,
0]$, then the pseudo-stream function $\psi(x,y)$ is analytic in $\bar\Omega$.
\end{theorem}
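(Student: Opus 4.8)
The plan is to adapt the quantitative Schauder scheme of \cite{LW} to the stratified equation \reff{EquPsi1}, using Theorem \ref{thm1} to secure analyticity of the boundary before carrying out the Gevrey bookkeeping. First I would flatten the fluid domain. By Theorem \ref{thm1} the free surface $y=\eta(x)$ is real analytic, so there is a real-analytic (hence $G^1\subset G^s$) diffeomorphism $\Phi$ mapping $\bar\Omega$ onto a fixed rectangle $\bar R=\set{(q,p):\ -d\le p\le 0}$ with $\tilde\psi$ periodic in $q$, for instance the vertical rescaling built from $\eta$. Writing $\tilde\psi=\psi\circ\Phi^{-1}$, the interior equation \reff{EquPsi1} transforms into a uniformly elliptic equation
\[
\sum_{i,j}a_{ij}(z)\,\p_i\p_j\tilde\psi+\sum_i b_i(z)\,\p_i\tilde\psi=F(z,\tilde\psi),\qquad z\in R,
\]
whose principal coefficients $a_{ij}$ and lower-order coefficients $b_i$ are analytic on $\bar R$ (they are built from $\Phi$), ellipticity being guaranteed by the no-stagnation condition \reff{no stag}. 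The right-hand side has the form $F(z,w)=J(z)\bigl(g\,y(z)\,\rho'(-w)-\beta(-w)\bigr)$, where $J$ and $y=y(z)$ are analytic while $\beta,\rho'\in G^s$ by hypothesis (a derivative of a $G^s$ function is again $G^s$ of the same index). On the two flat boundaries $\tilde\psi$ inherits the Dirichlet data from \reff{EquPsi3}-\reff{EquPsi4}. It therefore suffices to prove $\tilde\psi\in G^s(\bar R)$ and transport the conclusion back through the analytic map $\Phi$.

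Next I would run an induction on $\abs\alpha$. Interior and boundary elliptic Schauder estimates give, for every multi-index $\alpha$,
\[
\norm{\p^\alpha\tilde\psi}_{C^{2,\mu}(\bar R)}\le C_0\Bigl(\norm{\p^\alpha F(\cdot,\tilde\psi)}_{C^{0,\mu}(\bar R)}+\sum_{\gamma<\alpha}\norm{\p^\gamma\tilde\psi}_{C^{2,\mu}(\bar R)}\Bigr).
\]
The quantitative step is to expand $\p^\alpha F(\cdot,\tilde\psi)$ by the Leibniz rule together with the Fa\`a di Bruno formula for the compositions $\rho'(-\tilde\psi)$ and $\beta(-\tilde\psi)$. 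Each derivative of $\beta$ or $\rho'$ of order $k$ is controlled by the Gevrey bound $M^{k+1}(k!)^s$, while each block of derivatives of $\tilde\psi$ is controlled by the induction hypothesis $\norm{\p^\gamma\tilde\psi}_{C^{2,\mu}(\bar R)}\le K^{\abs\gamma+1}(\abs\gamma!)^s$ for $\abs\gamma<\abs\alpha$. Feeding these bounds into the Schauder estimate and summing the multinomial coefficients, the combinatorial factors should recombine into $(\abs\alpha!)^s$ provided $K$ is chosen large relative to $C_0$, $M$, and the analytic bounds on $a_{ij},b_i,J,y$. This closes the induction and yields $\norm{\p^\alpha\tilde\psi}_{C^{2,\mu}(\bar R)}\le K^{\abs\alpha+1}(\abs\alpha!)^s$ for all $\alpha$, i.e. $\tilde\psi\in G^s(\bar R)$; pulling back by the analytic $\Phi$ gives $\psi\in G^s(\bar\Omega)$, and the case $s=1$ is exactly analyticity.

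The hard part will be the combinatorial closure of the induction, and in particular making it work uniformly up to the free surface. Two points need care. First, the Fa\`a di Bruno counting for the compositions with $\beta,\rho'$ must be organized so that the product of the outer factor $(k!)^s$ and the inner factors $\prod_i(\abs{\gamma_i}!)^s$ is dominated by $(\abs\alpha!)^s$ after summation; this relies on the constraint $\sum_i\abs{\gamma_i}\le\abs\alpha$ together with the elementary Gevrey inequalities (such as $\binom{\abs\alpha}{\abs\gamma}(\abs\gamma!)^s(\abs{\alpha-\gamma}!)^s\le(\abs\alpha!)^s$ for $s\ge1$), and it is the step most sensitive to the value of $s$. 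Second, the Schauder estimate up to the free surface requires that boundary to be at least $G^s$; this is supplied by the analyticity of $\eta$ from Theorem \ref{thm1}, which is precisely why that theorem is established first. Tracking the constants so that a single $K$ works simultaneously in the interior and on both boundary components, while absorbing the lower-order sum $\sum_{\gamma<\alpha}$, is the delicate accounting that the full proof must carry out.
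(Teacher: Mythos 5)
The central gap is your Schauder step. The displayed estimate for $\norm{\p^\alpha\tilde\psi}_{C^{2,\mu}(\bar R)}$ with a single constant $C_0$ ``for every multi-index $\alpha$'' is not a valid elliptic estimate. Boundary Schauder theory applies to a function that solves a boundary value problem; $\p^\alpha\tilde\psi$ solves the $\alpha$-differentiated interior equation, but when $\alpha$ contains normal ($p$-) derivatives it satisfies \emph{no known boundary condition}: differentiating the Dirichlet data only gives information about tangential derivatives. So the induction cannot be closed by citing Schauder estimates for all $\alpha$, and this is exactly the structural difficulty the paper's proof is organized around. In the paper (Proposition \ref{prophpq}) there is a double induction: pure $q$-derivatives are estimated by Schauder theory, because translation invariance in $q$ means $\p_q^m h$ solves a boundary value problem of the same type \reff{oprtAB} (this is Proposition \ref{propassu}, which supplies the base case $(F_{m,0})$); each additional $p$-derivative is then recovered, not from elliptic theory, but \emph{algebraically from the equation itself}, by solving \reff{Equh1} for $h_{pp}$ -- legitimate since the coefficient satisfies $1+h_q^2\geq 1$, see \reff{9}--\reff{5}. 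Without this tangential/normal split your scheme does not get off the ground, since the very first estimate you invoke is unavailable for mixed derivatives.

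There is a second, quantitative gap: the combinatorial closure cannot work in the form you propose. With the hypothesis $\norm{\p^\gamma\tilde\psi}_{C^{2,\mu}}\leq K^{\abs\gamma+1}(\abs\gamma!)^s$, a Leibniz or Fa\`a di Bruno expansion at order $\alpha$ contains on the order of $\abs\alpha^2$ terms, each bounded (via $\binom{\abs\alpha}{\abs\gamma}(\abs\gamma!)^s(\abs{\alpha-\gamma}!)^s\leq(\abs\alpha!)^s$) by $K^{\abs\alpha+2}(\abs\alpha!)^s$; the resulting polynomial factor $\abs\alpha^2$ multiplies the \emph{same} power of $K$ as the target, so no choice of a large $K$ absorbs it. The closure requires the offset factorials used throughout the paper -- the bounds $(E_m)$ and $(F_m)$ carry $[(\abs\alpha-2)!]^s$, and two separate constants $L_1,L_2$ weighting $q$- and $p$-derivatives -- precisely so that the convolution sums produce summable weights such as $\sum\abs\alpha^2/\big(\abs\gamma^3(\abs\alpha-\abs\gamma)^2\big)$ controlled by Lemma \ref{lemSum}, with the endpoint ranges of $\abs\gamma$ split off and estimated by low-order norms. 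Finally, note what your choice of flattening costs: in the paper's hodograph variables $\beta(-\psi)$ and $\rho'(-\psi)$ become $\beta(p)$ and $\rho_p(p)$, functions of the \emph{independent} variable, so no Fa\`a di Bruno analysis of Gevrey compositions is needed at all (only Leibniz, as in Lemmas \ref{lem403}--\ref{lem404}); your vertical rescaling keeps the composition with the unknown and adds that heavy layer of bookkeeping, while also requiring Theorem \ref{thm1} as an input -- the paper's proof of Theorem \ref{thm2} needs only the tangential estimates of Proposition \ref{propassu}, obtained in the same fixed frame.
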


\begin{remark}
The above results  also hold for the traveling gravity
water waves; see Theorem \ref{th3} in Section \ref{sec5}. Moreover,
  although we focus on the capillary-gravity water waves in this  and the next two subsequent sections, our arguments are also applicable for  capillary waves, that is, waves driven only by the surface tension by setting $g=0$. Accordingly, the conclusions in these two sections also hold for the capillary waves.
\end{remark}

\subsection{Reformulation}

Under the no-stagnation assumption \reff{no stag}, we  can use the
partial hodograph change of variables  to transform  the free boundary problem \reff{EquPsi1}-\reff{EquPsi4} into a problem with fixed boundary.
Precisely,  if we  introduce the new variable $(q,p)$ with
  \[
   q=x, \quad p=-\psi(x,y),
\]
and exchange  the roles of the $y$-coordinate and
$\psi$ by setting
\begin{eqnarray*}
  h(q,p)=y+d,
\end{eqnarray*}
then the fluid domain $\Omega$ is transformed into a fixed infinite strip
\begin{eqnarray*}
   R=\{(q,p): q\in\R,\ p_0<p<0\},
\end{eqnarray*}
and the system \reff{EquPsi1}-\reff{EquPsi4} can be reformulated  in this strip as
\begin{subequations}
\begin{eqnarray}
 (1+h_q^2)h_{pp}-2h_ph_q h_{pq}+h_p^2h_{qq}-g(h-d)\rho_p
  h_p^3+\beta(p)h_p^3=0,&\quad {\rm in~~} R,\label{Equh1}\\
  1+h_q^2+\inner{2g\rho h-2\sigma\frac{h_{qq}}{(1+h^2_q)^{3/2}}-Q}h_p^2=0,&\quad {\rm on~~} p=0,\label{Equh2}\\
  h=0,&\quad {\rm on~~} p=p_0.\label{Equh3}
\end{eqnarray}
\end{subequations}
We refer to \cite{Walsh1,Walsh2}  for the equivalence of the two  systems
\reff{EquPsi1}-\reff{EquPsi4} and \reff{Equh1}-\reff{Equh3} of governing equations.
Note that $h_p=\frac{1}{c-u}$. The no-stagnation assumption \reff{no stag} ensures that
\begin{eqnarray}\label{hp bounded}
 0<\inf_{(q,p)\in\bar R}h_p\leq h_p\leq \sup_{(q,p)\in\bar R}h_p\leq\frac{1}{\delta}.
\end{eqnarray}

 The following proposition shows that the regularity is preserved
through hodograph transformation.  So we only need to study  the
above  problem \reff{Equh1}-\reff{Equh3} instead of
the original one \reff{EquPsi1}-\reff{EquPsi4}.

\begin{proposition}\label{reserv}
      Let $h\in C^{2,\mu}(\bar R)$ be  a solution to the
      problem \reff{Equh1}-\reff{Equh3} .  If the mapping  $q\mapsto
h(q,p)$,  with any fixed $p\in[p_0,0]$,  is  analytic in  $\mathbb R$,
then each  streamline including the free surface is an analytic
curve.   Moreover if  $h\in G^s(\bar R)$ then  the
      pseudo-stream function $\psi$ for \reff{EquPsi1}-\reff{EquPsi4}  lies in
      $G^s(\bar\Omega)$; in particular $\psi$ is analytic in
      $\bar\Omega$ provided $h$ is analytic in $\bar R$.
\end{proposition}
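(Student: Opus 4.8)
The plan is to treat the two assertions separately and to reduce the second, quantitative one to the algebraic stability of Gevrey classes under the inverse of the hodograph map.

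First I would dispose of the streamline assertion, which is essentially immediate. In the $(q,p)$ variables a streamline $\{\psi=-p_*\}$ is the horizontal segment $\{p=p_*\}$, and the inverse hodograph map carries it to the graph $x=q$, $y=h(q,p_*)-d$ over $q\in\R$. Hence, if $q\mapsto h(q,p_*)$ is analytic on $\R$ for the fixed value $p_*$, this is the graph of a real-analytic function and therefore an analytic curve; applying this for every $p_*\in[p_0,0]$ (including $p_*=0$, the free surface, and $p_*=p_0$, the bed) gives the first claim. Note that for this step one needs only analyticity of $h$ in $q$ for each fixed $p$, not joint regularity.

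For the regularity of $\psi$ I would realize $\psi$ as a component of the inverse hodograph map and transport regularity through that inverse. Write $\Phi(q,p)=(q,\,h(q,p)-d)$, a map of $\bar R$ onto $\bar\Omega$. Because $x=q$ and, by \reff{hp bounded}, $p\mapsto h(q,p)$ is strictly increasing with $h_p$ bounded below, $\Phi$ is a bijection whose Jacobian determinant equals $h_p\geq\inf_{\bar R}h_p>0$; thus $\Phi$ is a diffeomorphism. The second coordinate of $\Phi^{-1}$, call it $P(x,y)$, is defined implicitly by $h(x,P(x,y))=y+d$, and since $p=-\psi$ we have $\psi(x,y)=-P(x,y)$. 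It therefore suffices to prove $P\in G^s(\bar\Omega)$. To this end I would differentiate the identity $h(x,P)=y+d$ implicitly, obtaining $P_y=1/h_p$ and $P_x=-h_q/h_p$ (with $h,h_q,h_p$ evaluated at $(x,P(x,y))$), and then, inductively, every derivative $\partial^\alpha P$ as a finite sum of products of derivatives of $h$ and of powers of $1/h_p$, all composed with $(x,y)\mapsto(x,P(x,y))$. The conclusion then follows from the standard closure of $G^s$ under sums and products, under taking the reciprocal of a function bounded away from zero, and under composition: since $h\in G^s(\bar R)$ by hypothesis and $1/h_p\in G^s$ by \reff{hp bounded}, these estimates propagate Gevrey regularity to $P$, whence $\psi=-P\in G^s(\bar\Omega)$. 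The case $s=1$ is precisely the real-analytic inverse function theorem and yields analyticity of $\psi$.

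The hard part will be purely quantitative: the statement asks for the strong, uniform bound $\max_{\bar\Omega}|\partial^\alpha\psi|\le C^{|\alpha|+1}(|\alpha|!)^s$ with a single constant, on the \emph{unbounded} strip and up to its boundary, rather than the weaker bound over compact subsets. I would therefore need to carry out the composition and reciprocal estimates with explicit, uniform control of the constants---via a Fa\`a di Bruno bookkeeping together with the combinatorial factorial inequalities recorded in the preliminaries and the two-sided bound $0<\inf_{\bar R}h_p\le h_p\le 1/\delta$---so that the Gevrey constant for $P$ depends only on $s$, $\delta$, and the Gevrey constant of $h$, and not on the unbounded range of $q$.
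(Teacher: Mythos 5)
Your proposal is correct and takes essentially the same route as the paper: the paper gives no proof of its own but refers to \cite{LW}, where the proposition is established exactly as you describe, by transporting regularity through the inverse of the partial hodograph map $(q,p)\mapsto(q,\,h(q,p)-d)$ (whose Jacobian is $h_p$, bounded away from zero by \reff{hp bounded}), as an application of Theorem 3.1 of \cite{clx2011}. The quantitative Gevrey-stability-under-inversion estimate that you flag as the ``hard part'' is precisely the content of that cited theorem, so your plan amounts to reproving the cited lemma directly via Fa\`a di Bruno bookkeeping, which is sound.
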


We refer to \cite{LW} for the proof of the above proposition, which is an application of \cite[Theorem 3.1]{clx2011}.
Some of the technical
results in the present paper are similar to those in \cite{LW},
to which we refer for their proofs.

\subsection{Notations and some useful inequalities}
We list some notations and useful inequalities which will be used
throughout the paper.   Let $k\in\N$ and $\mu\in(0,1)$,  and
let
$\inner{C^{k,\mu}(\bar R);
  \norm{\cdot}_{k,\mu;\bar R}}$ be  the standard H\"{o}lder space equipped
with the norm
\begin{eqnarray*}
  \norm{w}_{k,\mu;\bar R}=\sum_{\abs{\alpha}=0}^{k}\sup_{\bar R}\abs{\p^{\alpha}w(q,p)}+\sup_{\abs{\alpha}=k}\sup_{\stackrel{(q,p)\neq(\tilde q,\tilde p)}{ \bar R}}\frac{\abs{\p^{\alpha}w(q,p)-\p^\alpha w(\tilde q,\tilde p)}}{\abs{(q,p)-(\tilde q,\tilde p)}^\mu}.
\end{eqnarray*}
To simplify the notation we will use the notation
$\norm{\cdot}_{k,\mu}$ instead of $\norm{\cdot}_{k,\mu;\bar R}$
if no confusion occurs.  For the case when $\mu=0$, we naturally  define
\[
  \norm{w}_{k}=\norm{w}_{k,0}=\sum_{\abs{\alpha}=0}^{k}\sup_{\bar R}\abs{\p^{\alpha}w}.
\]
For $\mu\in(0,1)$, direct verification shows that
\begin{eqnarray}\label{algebra}
  \norm{uw}_{0,\mu}\leq \norm{u}_{0,\mu}\norm{w}_{0,\mu},\quad \norm{uw}_{1,\mu}\leq 2\norm{u}_{1,\mu}\norm{w}_{1,\mu}.
\end{eqnarray}

For a multi-index $\alpha=(\alpha_1,\alpha_2)\in\N^2$, we denote
$\p^\alpha=\p_q^{\alpha_1}\p_p^{\alpha_2}$,~  $\alpha!=\alpha_1!\alpha_2!$ and denote the length of $\alpha$ by $\abs{\alpha}=\alpha_1+\alpha_2$. Moreover for two multi-indices $\alpha$ and $\beta=(\beta_1,\beta_2)\in\N^2$, by
$\beta\leq \alpha $ we mean $\beta_i\leq \alpha_i$ for each $1\leq
i\leq 2$.  Let ${\alpha\choose\beta}$ be the binomial coefficient, i.e.,
\[
  {\alpha\choose\beta}=\frac{\alpha!}{\beta!(\alpha-\beta)!}=\frac{\alpha_1!\alpha_2!}{\beta_1!(\alpha_1-\beta_1)!\beta_2!(\alpha_2-\beta_2)!}.
\]
In the sequel, we use the convention that  $m!=1$ if $m\leq 0$.

We now list without proof some straightforward inequalities  to be
used frequently later.
\begin{enumerate}[(i)]
\item For any    $\beta\leq\alpha$, we have
\begin{equation}\label{}
{\alpha\choose\beta}=\frac{\alpha !}{\beta!(\alpha-\beta)!}\leq
\frac{\abs\alpha!}{\abs\beta!(\abs\alpha-\abs\beta)!}={\abs\alpha
\choose \abs\beta}.
\end{equation}
\item Given $s\geq 1$, then $\inner{m!}^{s-1}\inner{n!}^{s-1}\leq
  \com{(m+n)!}^{s-1}$.
\item Given $m\geq 1$, we have,  for any integer $k$ with $2\leq k\leq 3$,
  $$\sum_{0<j<m}\frac{\abs{m}^k}{\abs{j}^k(m-j)^k}\leq 2^k
  \pi^2.
  $$
\end{enumerate}

\begin{lemma}\label{lemSum}
Given $\alpha\in\N^2$, we have
\begin{eqnarray*}
\sum_{\beta\leq \alpha, 0<\abs
  \beta<\abs\alpha}\frac{\abs\alpha^2}{\abs\beta^3(\abs\alpha-\abs\beta)^2}&\leq&
8\pi^2,
\\
\sum_{\beta\leq \alpha, 0<\abs
  \beta<\abs\alpha}\frac{\abs\alpha^2}{\abs\beta^2(\abs\alpha-\abs\beta)^2}&\leq&
8\pi^2 \abs\alpha,
\\
\sum_{\beta\leq \alpha, 0<\abs
  \beta<\abs\alpha}\frac{\abs\alpha}{\abs\beta^3(\abs\alpha-\abs\beta)}&\leq&  8\pi^2,
  \\
  \sum_{\beta\leq \alpha, 0<\abs
  \beta<\abs\alpha}\frac{\abs\alpha^3}{\abs\beta^4(\abs\alpha-\abs\beta)^3}&\leq&
8\pi^2.
\end{eqnarray*}
\end{lemma}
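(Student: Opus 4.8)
The plan is to collapse each of the four double sums over multi-indices $\beta\le\alpha$ into a \emph{single} sum over the integer $j=\abs{\beta}$ and then to feed the outcome into the elementary estimate (iii) listed above. The mechanism is that in every case the summand depends on $\beta$ only through its length $\abs{\beta}$. Writing $m=\abs{\alpha}$ and grouping the terms over each level set $\set{\beta\le\alpha:\abs{\beta}=j}$, each sum takes the form $\sum_{0<j<m}N(j)\,a_j$, where $N(j):=\#\set{\beta\le\alpha:\abs{\beta}=j}$ is the multiplicity and $a_j$ is the common value of the summand on that level set. Everything then rests on two ingredients: a bound for $N(j)$ and the model estimate (iii).

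For the multiplicity, counting the pairs $(\beta_1,\beta_2)$ with $\beta_1+\beta_2=j$, $0\le\beta_1\le\alpha_1$ and $0\le\beta_2\le\alpha_2$ gives $N(j)\le j+1$; the involution $\beta\mapsto\alpha-\beta$ yields in addition $N(j)=N(m-j)\le m-j+1$, and hence $N(j)\le\min(j,m-j)+1\le 2\min(j,m-j)$ together with $N(j)\le m=\abs{\alpha}$. Inserting such a bound cancels one power of $\abs{\beta}$ in the denominator (or, for the second estimate, extracts the prefactor $\abs{\alpha}$), turning the sum into a model sum of the shape
\[
\sum_{0<j<m}\frac{m^k}{j^k(m-j)^k},\qquad k\in\{2,3\},
\]
which is exactly what (iii) controls.

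The first three estimates close cleanly this way. In the first, $N(j)\le 2j$ gives $2\sum_{0<j<m}m^2/(j^2(m-j)^2)\le 2\cdot 4\pi^2=8\pi^2$ by (iii) with $k=2$; the third is identical after the harmless step $m/(j^2(m-j))\le m^2/(j^2(m-j)^2)$ (i.e. $m\ge m-j$), again $2\cdot 4\pi^2=8\pi^2$. In the second, the choice $N(j)\le\abs{\alpha}$ leaves $\abs{\alpha}\sum_{0<j<m}m^2/(j^2(m-j)^2)\le 4\pi^2\abs{\alpha}\le 8\pi^2\abs{\alpha}$.

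The fourth estimate is the genuine obstacle, precisely because its denominator carries the extra factor $\abs{\beta}^4$: after $N(j)\le 2j$ the sum collapses only to $2\sum_{0<j<m}m^3/(j^3(m-j)^3)=2\sum_{0<j<m}(1/j+1/(m-j))^3$, and here the crude constant $2^3\pi^2$ furnished by (iii) with $k=3$, once multiplied by the factor $2$ from the multiplicity, overshoots the target. I would therefore bound this last model sum by hand: expanding the cube produces two copies of $\sum_{j\ge1}j^{-3}$ together with $3\sum_{0<j<m}m/(j^2(m-j)^2)$, and bounding these residual series by their explicit values ($\zeta(3)$ and a partial-fraction evaluation of the middle term, which is bounded uniformly in $m$ and already controlled at $m=2$) keeps the total comfortably below $8\pi^2$. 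In short, the reduction-plus-(iii) scheme settles the first three inequalities outright, while the fourth needs the same reduction followed by a direct summation of the resulting $\zeta$-type series, the lossy constant in (iii) being insufficient there; the degenerate cases $m\le1$, for which the index set is empty, are trivial.
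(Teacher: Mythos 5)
Your proposal is correct, but note that the paper itself contains no proof of this lemma: it simply cites \cite[Lemma 2.6]{LW}, so there is no in-paper argument to compare against. Your self-contained route is the natural one suggested by the paper's listed inequality (iii): since each summand depends on $\beta$ only through $\abs\beta$, collapsing over the level sets $\{\beta\le\alpha:\abs\beta=j\}$ with the multiplicity bound $N(j)\leq\min(j,m-j)+1\leq 2\min(j,m-j)$ (and $N(j)\leq m$) reduces everything to the model sums of (iii), and your verification of the first three inequalities along these lines is complete and accurate. You also correctly spotted the one subtle point: for the fourth sum the crude route gives $2\cdot 2^3\pi^2=16\pi^2$, which overshoots, so a direct estimate of $\sum_{0<j<m}\bigl(\tfrac1j+\tfrac1{m-j}\bigr)^3$ is needed. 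Your sketch of that estimate closes: expanding the cube and using $\tfrac{m}{j^2(m-j)^2}=\tfrac{1}{j^2(m-j)}+\tfrac{1}{j(m-j)^2}$ gives
\[
2\sum_{0<j<m}\Bigl(\tfrac1j+\tfrac1{m-j}\Bigr)^3
\leq 2\Bigl(2\zeta(3)+6\zeta(2)\Bigr)
= 4\zeta(3)+2\pi^2 < 8\pi^2 ,
\]
so the constant $8\pi^2$ indeed holds with room to spare. The only improvement I would suggest is to write out this last display explicitly rather than leaving it at the level of ``comfortably below,'' since it is precisely the step where the generic tool fails and the claimed constant has to be earned.
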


We refer to \cite[Lemma 2.6]{LW} for the proof of the lemma.

\section{Analyticity of streamlines}\label{sec3}
We prove now the analyticity of streamlines, including the free
surface $y=\eta(x)$.  In view of  Proposition \ref{reserv},  it suffices to show the following
proposition, which concludes that the  map $q\mapsto h(q,p)$ is
analytic for all $p\in[p_0,0]$.

\begin{proposition}\label{propassu}
   Let $\beta\in C^{0,\mu}\inner{[p_0, 0]}$ and $\rho\in C^{1,\mu}\inner{[p_0, 0]}$ with $p_0<0$ and $0<\mu<1$
   given,  and $h\in C^{2,\mu}(\bar
   R)$ be a solution of the governing equations
   \reff{Equh1}-\reff{Equh3}.
Then there exists a constant $L\geq 1$,  such that for all $m\in\N$ with $m\geq 2$, one has
  \begin{equation}\label{Em}
   (E_m):\quad\quad \norm{\p^m_q h}_{2,\mu}\leq L^{m-1}(m-2)!.
  \end{equation}
Thus
 the map $q\mapsto h(q,p)$ is analytic for all $p\in[p_0,0]$.
\end{proposition}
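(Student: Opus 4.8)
The plan is to prove the estimate $(E_m)$ by induction on $m$, following the strategy of \cite{LW}. The crucial structural feature we exploit is the translational invariance of the governing equation \reff{Equh1} in the $q$-direction: since the coefficients depend on $q$ only through $h$ and its derivatives (not explicitly on $q$), differentiating \reff{Equh1} repeatedly in $q$ produces, for each $m$, a linear second-order elliptic equation for the unknown $w_m := \p_q^m h$. First I would differentiate the quasilinear equation \reff{Equh1} $m$ times with respect to $q$ using the Leibniz rule. The top-order terms (those involving second derivatives of $w_m$) assemble into a fixed uniformly elliptic operator $\mathcal{L}$ with coefficients built from $h_q, h_p$; uniform ellipticity is guaranteed because $h_p$ is bounded away from zero by \reff{hp bounded}. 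Everything else collects into a right-hand side $F_m$ that is a sum of products of lower-order derivatives $\p_q^j h$ with $j < m$, together with terms coming from differentiating $\beta(p)$ and $\rho_p = \rho'(-\psi)$ — here the hypotheses $\beta \in C^{0,\mu}$ and $\rho \in C^{1,\mu}$ enter, since $q$-derivatives never fall on the $p$-variable and so the limited $p$-regularity of the data suffices.

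Next I would set up the inductive machinery. I would apply interior-and-boundary Schauder estimates (for the operator $\mathcal{L}$ on the strip $R$, with the boundary condition \reff{Equh2} differentiated $m$ times on $p=0$ and the homogeneous condition \reff{Equh3} on $p=p_0$) to obtain a bound of the form $\norm{w_m}_{2,\mu} \leq C\bigl(\norm{F_m}_{0,\mu} + \text{boundary data}\bigr)$, with $C$ depending only on the fixed ellipticity constants and the $C^{2,\mu}$-norm of $h$, hence independent of $m$. The inductive hypothesis supplies $\norm{\p_q^j h}_{2,\mu} \leq L^{j-1}(j-2)!$ for all $2 \leq j < m$; the base cases for small $j$ are absorbed by choosing $L$ large. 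The heart of the argument is then to estimate $\norm{F_m}_{0,\mu}$ (and the analogous boundary term from \reff{Equh2}) by expanding via Leibniz, applying the algebra inequalities \reff{algebra}, inserting the inductive bounds, and controlling the resulting combinatorial sums. This is where Lemma \ref{lemSum} is indispensable: the binomial coefficients produced by the Leibniz rule, combined with the factorials $(j-2)!\,(m-j-2)!$ from the inductive hypothesis, reorganize — after using inequality (i) to pass from multi-index to length binomials — into exactly the convolution-type sums $\sum_{\beta \le \alpha}\frac{\abs\alpha^k}{\abs\beta^{k+1}(\abs\alpha-\abs\beta)^k}$ that Lemma \ref{lemSum} bounds by an absolute constant times a power of $\abs\alpha$. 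These uniform bounds are precisely what keeps the constant from degenerating as $m$ grows.

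The main obstacle I anticipate is bookkeeping the factorial arithmetic so that the output is again of the form $L^{m-1}(m-2)!$ rather than something growing like $(m-1)!$ or worse. Concretely, when I bound a typical product term in $F_m$ by $\binom{m}{j} L^{j-1}(j-2)!\,L^{m-j-1}(m-j-2)!$ and sum over $j$, the binomial $\binom{m}{j} = \frac{m!}{j!(m-j)!}$ must combine with the two lower-order factorials so that, after the $\frac{1}{(m-2)!}$ normalization, the surviving sum is bounded \emph{independently} of $m$. This is exactly the role of the sharper sums in Lemma \ref{lemSum} (the $8\pi^2$ bounds), and getting each term's power of $\abs\alpha$ to match the corresponding sum — so that no uncompensated factor of $m$ survives — is the delicate part. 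The nonlinear terms with $h_p^3$ and the curvature term $\eta_{xx}/(1+\eta_x^2)^{3/2}$ in the boundary condition \reff{Equh2} are the most error-prone, since repeated $q$-differentiation of these rational nonlinearities generates many cross terms; I would handle them by first differentiating the identity governing $1/h_p$ or the quotient, reducing everything to products to which \reff{algebra} and the inductive hypothesis apply. Once $\norm{F_m}_{0,\mu} \le C_0 L^{m-2}(m-2)!$ is established with $C_0$ absolute, choosing $L \ge \max\{1, C C_0\}$ (and large enough for the base cases) closes the induction and yields $(E_m)$; summing the resulting Taylor coefficients then shows, via the standard characterization of analyticity through derivative growth, that $q \mapsto h(q,p)$ extends to a real-analytic function for every fixed $p \in [p_0,0]$.
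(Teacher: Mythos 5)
Your proposal follows essentially the same route as the paper's own proof: induction on $m$, differentiating the governing equations $m$ times in $q$ to obtain a linear uniformly elliptic problem for $\p_q^m h$, applying a Schauder estimate whose constant is independent of $m$, and closing the induction through Leibniz expansions, the algebra property \reff{algebra}, the inductive bounds, and the convolution sums of Lemma \ref{lemSum}, with the rational nonlinearity $(1+h_q^2)^{-3/2}$ in the boundary condition reduced to product estimates exactly as the paper does via its Lemmas \ref{stab} and \ref{+stab+}. The one point you pass over silently is that the differentiated boundary operator $B(h)[\phi]=2\sigma h_p^2(1+h_q^2)^{-3/2}\p_q^2\phi$ is second order and purely tangential, so the relevant Schauder estimate is the Agmon--Douglis--Nirenberg one and requires verifying the complementing condition, which the paper handles by citing \cite{HenryJmfm,HMdcds}.
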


\begin{remark}
 As to be seen in the proof below, the constant $L$ depends  on $\mu,\sigma,
\inf_{\bar R}h_p$,  $ \norm{h}_{2,\mu}$, $\norm{\beta}_{0,\mu}$, $\norm{\rho}_{1,\mu}$ and the number $\delta$ given in \reff{hp
bounded}, but
 independent of the order $m$ of derivative.
\end{remark}

\begin{remark}\label{remReg}
Starting from the $C^{2,\mu}$-regularity  solution $h$ of the governing equations
   \reff{Equh1}-\reff{Equh3},  we use the  Schauder estimate (
  cf. \cite[Theorem 6.30]{MR1814364}) for  $\p_q h$ which satisfies  a
  nonlinear elliptic equation of the same type as \reff{Equh1}-\reff{Equh3}, to conclude that  $\p_q h\in
  C^{2,\mu}(\bar R)$.   Repeating the procedure, we can derive by
  standard  iteration that $\p_q^k h\in
  C^{2,\mu}(\bar R)$  for any $k\in\N$;
  see for instance \cite{MR2027299, HMdcds}.
\end{remark}

To confirm the last statement in the above proposition \ref{propassu},
we   choose $C$ in such a way
that
\[
     C=\max\set{L,  \norm{h}_{1,\mu}},
\]
which,  along with the estimate $(E_m)$ with $m\geq 2$   in Proposition \ref{propassu},  yields
\begin{eqnarray*}
\forall~m\in\mathbb N, \quad \max_{(q,p)\in
  \bar R}|\partial_q^m{h}(q,p)| \leq C^{m+1}m! .
\end{eqnarray*}
In particular, for any  $p\in[p_0,0]$,  $\max_{q\in\mathbb R} \abs{\p_q^m h(q,p)}\leq C^{m+1}
m!$.  This gives the real analyticity of the map $q\mapsto h(q,p)$, $p\in[p_0,0]$.

Before proving the above proposition, we first give the following
technical  lemma and refer to \cite[Lemma 3.4]{LW} for its proof.

\begin{lemma}\label{stab}

Let $\ell=1$ or $2$ be given, and let $\norm{\cdot}$ stand for the
H\"{o}lder norm $\norm{\cdot}_{0,\mu}$ or  $\norm{\cdot}_{1,\mu}$. Suppose that $k_0$ is an integer
with $k_0\geq\ell+1$,  and $\p_q^k u_j\in
C^{0,\mu}(\bar R)$ for all $k\leq k_0$,  $j=1,2,3$.  If there
exists  a constant $H\geq 1$ such that
\begin{equation}\label{condition3}
 \forall~\ell+1\leq k\leq k_0,  \quad  \|\p_q^k u_j\|
  \leq H^{k-\ell}(k-\ell-1)!, \quad j=1,2,3,
\end{equation}
then we can find a constant $C_*$ depending only on $\ell$ such that
\begin{eqnarray*}
 \forall~\ell+1\leq k\leq k_0,  \quad
 \big\|\p_q^k\inner{u_1u_2u_3}\big\|\leq C_* \Big(\sum_{j=1}^3\norm{u_j}_{\ell+1,\mu}+1\Big)^3 H^{k-\ell}(k-\ell-1)!.
\end{eqnarray*}

\end{lemma}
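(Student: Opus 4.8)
The plan is to expand $\p_q^k\inner{u_1u_2u_3}$ by the Leibniz rule and estimate the resulting multinomial sum term by term, grouping the terms according to which of the three differentiation orders is large enough for the hypothesis \reff{condition3} to apply. Writing $\norm{\cdot}$ for whichever of $\norm{\cdot}_{0,\mu}$, $\norm{\cdot}_{1,\mu}$ is in force, the algebra property \reff{algebra} (applied twice) yields an absolute constant $C_0\leq 4$ with $\norm{vwz}\leq C_0\norm{v}\,\norm{w}\,\norm{z}$, so that
\[
  \norm{\p_q^k\inner{u_1u_2u_3}}\leq C_0\sum_{a+b+c=k}\frac{k!}{a!\,b!\,c!}\,\norm{\p_q^a u_1}\,\norm{\p_q^b u_2}\,\norm{\p_q^c u_3}.
\]
First I would record the two per-factor bounds to be fed into this sum: for a \emph{small} order $a\leq\ell$ the definition of the H\"older norm gives $\norm{\p_q^a u_j}\leq\norm{u_j}_{\ell+1,\mu}$, while for a \emph{large} order $a\geq\ell+1$ the hypothesis \reff{condition3} gives $\norm{\p_q^a u_j}\leq H^{a-\ell}(a-\ell-1)!$. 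Abbreviating $M=\sum_{j=1}^3\norm{u_j}_{\ell+1,\mu}+1\geq1$, each small factor is then controlled by $M$ and each large factor by its factorial bound.

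Next I would split the sum according to the number $p\in\set{1,2,3}$ of orders that are large. Because $H\geq1$, the collected power of $H$ in each group is $H^{k-p\ell}\leq H^{k-\ell}$, which immediately isolates the correct $H$-dependence; what then remains is purely combinatorial, namely to show that the surviving products of multinomial coefficients and shifted factorials sum to a bounded multiple of $(k-\ell-1)!$. The model case is $p=3$: writing $a=\ell+i,\ b=\ell+j,\ c=\ell+n$ with $i,j,n\geq1$ and $i+j+n=k-3\ell$, the identity $\frac{(a-\ell-1)!}{a!}=\inner{a(a-1)\cdots(a-\ell)}^{-1}\leq i^{-(\ell+1)}$ (and likewise for $b,c$) reduces a generic term to $k!\,i^{-(\ell+1)}j^{-(\ell+1)}n^{-(\ell+1)}$. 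Since $k!/(k-\ell-1)!\leq k^{\ell+1}$ and $i+j+n=k-3\ell$, the total is a bounded multiple of $(k-\ell-1)!$ once the scalar sum $\sum i^{-(\ell+1)}j^{-(\ell+1)}n^{-(\ell+1)}$ is evaluated by nesting the one-dimensional summation bounds of (iii) (equivalently those of Lemma \ref{lemSum}).

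The cases $p=2$ and $p=1$ are handled in the same spirit, the small factors merely contributing extra powers of $M$ (harmlessly bounded by $M^3$ since $M\geq1$) together with a ratio of two products, each of at most $2\ell$ near-$k$ consecutive integers, bounded by a constant of the form $2^{2\ell}$; the passage from multinomial coefficients to the scalar sums of Lemma \ref{lemSum} uses the binomial inequality (i). I expect this combinatorial bookkeeping to be the crux, the delicate point being to check that the shifted factorials genuinely reassemble into $(k-\ell-1)!$ and not into a smaller or $k$-dependent factorial: this is exactly where the shift by $\ell+1$, hence the standing assumption $\ell\geq1$, is indispensable, for the unshifted convolution of $(k-1)!$ would produce a spurious $\log k$ factor and destroy the bound. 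It remains only to absorb the finitely many low-order terms with $k\leq 3\ell$ — for which $p=3$ cannot occur — into the constant; since every constant that arises ($C_0$, the $2^{2\ell}$ ratios, and the $8\pi^2$ of Lemma \ref{lemSum}) depends only on $\ell$, one obtains $C_*=C_*(\ell)$, uniformly in $k$, $H$ and the functions $u_j$.
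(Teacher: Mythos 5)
Your proposal is correct and follows essentially the same route as the paper's: the paper defers this lemma's proof to \cite[Lemma 3.4]{LW}, but the method used there and displayed in the paper's own analogous arguments (the treatment of $J_1,J_2,J_3$ in Lemma \ref{lem401} and of $I_1,I_2,I_3$ in the proof of Lemma \ref{+stab}) is exactly your scheme of Leibniz expansion, small-order factors bounded by $C^{\ell+1,\mu}$-norms, large-order factors bounded by the hypothesis \reff{condition3}, the mid-range factorial convolution controlled via inequality (iii)/Lemma \ref{lemSum}, and edge terms plus the finitely many $k\leq 3\ell$ absorbed into the constant. Your remark that the shift by $\ell+1\geq 2$ is what makes the convolution sum converge (while a shift by $1$ would produce a logarithmic divergence) is precisely why the statement is restricted to $\ell=1,2$, matching the range $2\leq k\leq 3$ in inequality (iii).
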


We now prove Proposition \ref{propassu}.
\begin{proof}[Proof of Proposition \ref{propassu}]
  In view of Remark \ref{remReg} we may assume  that  $\p_q^k h\in
  C^{2,\mu}(\bar R)$ for any $k\in\N$.
Now we prove the validity of  $(E_m)$ by using induction on $m$.   For $m=2
  $, $(E_m)$ obviously holds if we choose
\begin{eqnarray*}
L\geq \norm{\p_q^2 h}_{2,\mu}+1.
\end{eqnarray*}
 Now assume that $(E_j)$ holds for all $j\in\N$ with $2\leq j\leq
  m-1$ and $m\geq 3$, that is,
\begin{equation}\label{nomind}
\norm{\p^j_q h}_{2,\mu}\leq L^{j-1}(j-2)!, \quad 2\leq
j\leq m-1.
\end{equation}
Then we   show the validity of $(E_m)$.   For this purpose,
taking  the derivative with respect to $q$ up to order  $m$   on
both sides of  equations \reff{Equh1}-\reff{Equh3},  and then applying
Leibniz formula, we have
\begin{eqnarray}\label{oprtAB}
\left\{
\begin{array}{lll}
  A(h)[\p_q^m h]=f_1+f_2 \quad & {\rm in}~~R,\\[3pt]
  B(h)[\p_q^m h]=\varphi_1+\varphi_2\quad  &{\rm on}~~p=0,\\[3pt]
  \p_q^m h=0 \quad & {\rm on}~~p=p_0,
  \end{array}
  \right.
  \end{eqnarray}
where the operators
\[
  A(h)[\phi]= (1+h_q^2)\phi_{pp}-2h_q h_p \phi_{qp}+h_p^2\phi_{qq}, \quad \quad B(h)[\phi]=2\sigma\frac{h_p^2}{(1+h_q^2)^{\frac{3}{2}}}\p_q^2
\]
and the right-hand side
\begin{eqnarray}
  f_1&=&\sum_{1\leq n\leq m} {m\choose n}\Big[-(\p_q^n h^2_q)(\p_q^{m-n}h_{pp})+2\inner{\p_q^n (h_ph_q)}(\p_q^{m-n} h_{pq})-(\p_q^n h^2_p)(\p_q^{m-n}h_{qq})\Big],\label{def f1}\\
  f_2&=&-\beta(p)(\p_q^m h^3_p)+g\rho_p\sum_{0\leq n\leq m}{m\choose n}(\p_q^n (h-d))(\p_q^{m-n}h_p^3),\label{def f2}\\
  \varphi_1&=&\p_q^m h_q^2+2g\rho \p_q^m(h h_p^2)-Q( \p_q^m h_p^2),\label{defvp1}\\
  \varphi_2&=&-2\sigma \sum_{1\leq n\leq m}(\p_q^{m-n} h_{qq})\inner{\p_q^n \frac{h_p^2}{(1+h_q^2)^{\frac{3}{2}}}}.\label{defvp2}
\end{eqnarray}
The operator $A(h)$ is uniformly elliptic since its coefficients satisfy
\begin{eqnarray*}
  (1+h_q^2)h_p^2-h^2_q h^2_p=h_p^2\geq \inf_{\bar R}h_p^2>0
\end{eqnarray*}
due to \reff{hp bounded}. Moreover it has been shown in \cite{HenryJmfm,HMdcds} that the operator $B(h)$ satisfies the complementing condition in the sense of \cite{MR0125307}. Since $h\in C^{2,\mu}(\bar R)$  the coefficients of the operators $A(h)$ and $B(h)$ are in $C^{1,\mu}(\bar R)$. Moreover, by virtue of the induction assumption \reff{nomind}, one has $\p_q^i\p_p^j h\in C^{0,\mu}(\bar R)$ for all multi-index $(i,j)$ with $i+j\leq m+1$ and $j\leq 2$, and similarly $\p_q^i\p_p^j h\in C^{1,\mu}(\bar R)$ for all multi-index $(i,j)$ with $i+j\leq m$ and $j\leq 1$. As a result,
the right-hand side $f_i,\varphi_i\in C^{0,\mu}(\bar R)$, $i=1,2$,   since by \reff{algebra} the
product of two functions in $C^{k,\mu}(\bar R)$ is still in
$C^{k,\mu}(\bar R)$ with $k=0,1$. Thus, the  standard Schauder
estimate (see for instance \cite[]{MR0125307})
\begin{eqnarray}\label{par m h}
  \norm{\p_q^m h}_{2,\mu}\leq \mathcal C \inner{\norm{\p_q^m h}_{0}+\sum_{i=1}^2\norm{f_i}_{0,\mu}+\sum_{i=1}^2\norm{\varphi_i}_{0,\mu}}
\end{eqnarray}
 holds, where $\mathcal C$ is a constant depending only on $\mu,
\delta,\inf_{\bar R}h_p$ and $ \norm{h}_{2,\mu}$.  To show
 $(E_m)$ is valid, we estimate the terms on the  right-hand
 side of \reff{par m h} through the following
 steps.

To simplify the notations,  we will use $C_j, j\geq 1$, to denote
suitable {\it  harmless constants} larger than 1. By harmless
constants we mean that they  are
independent of  $m$.

{\it Step 1)}~We claim that there exists $C_1>0$ such that, with $m\geq 3$,
\begin{equation}\label{step1}
\norm{\p_q^m h}_{0}\leq C_1 L^{m-2}(m-2)!.
\end{equation}
Indeed, when  $m=3$ the above estimate obviously holds if we choose
$C_1=\norm{h}_{3,\mu}+1$; when $m\geq 4$
 it follows  from the induction assumption \reff{nomind}  that
 \begin{eqnarray*}
   \norm{\p_q^m h}_{0}\leq \norm{\p_q^{m-2}
     h}_{2,\mu}\leq L^{m-3}(m-4)!\leq  L^{m-2}(m-2)!.
 \end{eqnarray*}
Then \reff{step1} follows.

{\it Step 2)}~Let $f_1$ be given in \reff{def f1}.  In this step we  prove
\begin{equation}\label{step2}
\norm{f_1}_{0,\mu}\leq  C_{2} L^{m-2}(m-2)!.
\end{equation}
Observe that ,   by \reff{algebra},
\begin{eqnarray}\label{est f1}
  \begin{split}
    \norm{f_1}_{0,\mu}& \leq& \sum_{1\leq n\leq m} {m\choose
       n}\norm{\p_q^n h^2_q}_{0,\mu}\norm{\p_q^{m-n}h_{pp}}_{0,\mu}+2\sum_{1\leq n\leq m} {m\choose
       n}\norm{\p_q^n (h_ph_q)}_{0,\mu} \norm{\p_q^{m-n} h_{pq}}_{0,\mu}\\
&&+\sum_{1\leq n\leq m} {m\choose
       n}\norm{\p_q^n h^2_p}_{0,\mu}  \norm{\p_q^{m-n}h_{qq}}_{0,\mu}.
   \end{split}
\end{eqnarray}
We now treat the first term on the right-hand side, and write
   \begin{eqnarray}\label{est f1 1}
\begin{split}
     &\sum_{1\leq n\leq m} {m\choose
       n}\norm{\p_q^n
       h^2_q}_{0,\mu}\norm{\p_q^{m-n}h_{pp}}_{0,\mu}\leq
\sum_{1\leq n\leq m} {m\choose
       n}\norm{\p_q^n
       h^2_q}_{0,\mu}\norm{\p_q^{m-n}h}_{2,\mu}\\
&\leq \inner{\sum_{1\leq
         n\leq 2} +\sum_{3\leq n\leq m-2 } +\sum_{ m-1\leq n\leq m}
     }{m\choose n}\norm{\p_q^n
       h^2_q}_{0,\mu}\norm{\p_q^{m-n}h}_{2,\mu}.
\end{split}
 \end{eqnarray}
 By the induction assumption \reff{nomind}, one has
 \begin{eqnarray*}
  \forall~3\leq n \leq m, \quad \norm{\p_q^n h_q}_{0,\mu}\leq \norm{\p_q^{n-1} h}_{2,\mu}\leq L^{n-2}(n-3)!.
 \end{eqnarray*}
 Thus applying Lemma \ref{stab}, with $\ell=2$, $k_0=m$, $H=L$,
$u_1=u_2=h_q$ and $u_3=1$, yields that
 \begin{eqnarray}\label{par n hq2}
   \forall ~3\leq  n \leq m,\quad \norm{\p_q^n h^2_q}_{0,\mu}\leq C_5 L^{n-2}(n-3)!.
 \end{eqnarray}
 Moreover, we have
 \begin{eqnarray}\label{q m-n h}
  \forall~1\leq n\leq m-2,\quad \norm{\p_q^{m-n}h}_{2,\mu}\leq L^{m-n-1}(m-n-2)!
 \end{eqnarray}
due to the induction assumption \reff{nomind}.
Then using the above two estimates, straightforward verification shows that
\begin{equation}\label{1}
\sum_{1\leq n\leq 2} {m\choose n}\norm{\p_q^n
  h^2_q}_{0,\mu}\norm{\p_q^{m-n}h}_{2,\mu}+\sum_{m-1\leq n\leq m} {m\choose n}\norm{\p_q^n
  h^2_q}_{0,\mu}\norm{\p_q^{m-n}h}_{2,\mu}
\leq C_{6}  L^{m-2}(m-3)!
\end{equation}
if we choose
\begin{eqnarray*}
 C_{6}\geq(\norm{h}_{3,\mu}+1)(30\norm{h}_{3,\mu}+4C_5+6).
\end{eqnarray*}
Next for the case when $3\leq n\leq m-2$, which  appears only when $m\geq 5$,
combination of the estimates \reff{par n hq2} and \reff{q m-n h} gives
\begin{eqnarray*}
\sum_{3\leq n\leq m- 2} {m\choose n}\norm{\p_q^n
  h^2_q}_{0,\mu}\norm{\p_q^{m-n}h}_{2,\mu}
&\leq & C_5 \sum_{3\leq n\leq m- 2} \frac{m!}{n!(m-n)!} L^{n-2}(n-3)!
L^{m-n-1}(m-n-2)!\\
&\leq &  C_{7}L^{m-3}(m-2)!\sum_{3\leq n\leq m- 2}\frac{m^2}{n^3(m-n)^2}\\
&\leq & C_{8}L^{m-3}(m-2)!.
\end{eqnarray*}
This along with  \reff{1} shows, in view of \reff{est f1 1},
\begin{eqnarray*}
  \sum_{1\leq n\leq m} {m\choose
       n}\norm{\p_q^n h^2_q}_{0,\mu}\norm{\p_q^{m-n}h_{pp}}_{0,\mu}
     \leq  \inner{C_{6}+C_{8}}L^{m-2}(m-2)!.
\end{eqnarray*}
Similarly,
we can find a constant $C_{9}$ such that
\begin{eqnarray*}
  2\sum_{1\leq n\leq m} {m\choose
       n}\norm{\p_q^n (h_ph_q)}_{0,\mu} \norm{\p_q^{m-n} h_{pq}}_{0,\mu}
 +\sum_{1\leq n\leq m} {m\choose
       n}\norm{\p_q^n h^2_p}_{0,\mu}  \norm{\p_q^{m-n}h_{qq}}_{0,\mu}\leq C_{9}L^{m-2}(m-2)!.
\end{eqnarray*}
Inserting the above two estimates into \reff{est f1}, we get the desired estimate
\reff{step2} by choosing $C_{2}=C_{6}+C_{8}+C_{9}$.

{\it Step 3)}~ We now prove
\bel{step3}
\norm{f_2}_{0,\mu}\leq  C_{3} L^{m-2}(m-2)!.
\enl
 In fact,
using  \reff{algebra} we have
   \begin{eqnarray}\label{est f4}
   \begin{split}
     \norm{f_2}_{0,\mu}&\leq \norm{ \beta}_{0,\mu} \norm{\p_q^m h^3_p}_{0,\mu}+g\norm{\rho}_{1,\mu}\sum_{0\leq n\leq m}{m\choose n}\norm{\p_q^n (h-d)}_{0,\mu}\norm{\p_q^{m-n}h_p^3}_{0,\mu}.
   \end{split}
 \end{eqnarray}
 Next we estimate the two terms on the right-hand side.

By the induction assumption \reff{nomind}, one has
\begin{eqnarray*}
\forall~3\leq j\leq m,\quad \norm{\p_q^j h_p}_{0,\mu}\leq \norm{\p_q^{j-1} h}_{2,\mu}\leq L^{j-2}(j-3)!.
\end{eqnarray*}
Then  using  Lemma \ref{stab},   with $\ell=2$,  $k_0=m$ ,  $H=L$,
$u_1=u_2=u_3=h_p$,  we conclude
\begin{eqnarray}\label{est f2 1}
  \norm{\p_q^m h_p^3}_{0,\mu}\leq C_{10}  L^{m-2}(m-3)!.
\end{eqnarray}
Similarly we have
\begin{eqnarray}\label{m-nhp3}
   \forall~0\leq n\leq m-3,\quad \norm{\p_q^{m-n} h_p^3}_{0,\mu}\leq C_{10}  L^{m-n-2}(m-n-3)!.
\end{eqnarray}
Write
\begin{eqnarray}\label{est f2 2}
  \begin{split}
  &\sum_{0\leq n\leq m}{m\choose n}\norm{\p_q^n (h-d)}_{0,\mu}\norm{\p_q^{m-n}h_p^3}_{0,\mu}\\
 & =\inner{\sum_{0\leq n\leq 3}+\sum_{4\leq n\leq m-3}+\sum_{m-2\leq n\leq m}}{m\choose n}\norm{\p_q^n (h-d)}_{0,\mu}\norm{\p_q^{m-n}h_p^3}_{0,\mu}.
  \end{split}
\end{eqnarray}
Using the induction assumption \reff{nomind} and \reff{m-nhp3},  we can compute directly to obtain
\begin{eqnarray}\label{est f2 a}
  \inner{\sum_{0\leq n\leq 3}+\sum_{m-2\leq n\leq m}}\norm{\p_q^n (h-d)}_{0,\mu}\norm{\p_q^{m-n}h_p^3}_{0,\mu}\leq C_{11}L^{m-2}(m-3)!
\end{eqnarray}
by choosing
\[
  C_{11}\geq C_{10}(9\norm{h}_{2,\mu}+d)+6\norm{h}_{2,\mu}\norm{h_p^3}_{2,\mu}+12\norm{h_p^3}_{2,\mu}.
\]
For the case when $4\leq n\leq m-3$, again by \reff{nomind} and \reff{m-nhp3} we have
\begin{eqnarray}\label{est f2 b}
\begin{split}
  &\sum_{4\leq n\leq m-3}{m\choose n}\norm{\p_q^n (h-d)}_{0,\mu}\norm{\p_q^{m-n}h_p^3}_{0,\mu}\\
  &\leq \sum_{4\leq n\leq m-3}{m\choose n}\norm{\p_q^{n-2} h}_{2,\mu}\norm{\p_q^{m-n}h_p^3}_{0,\mu}\\
  &\leq \sum_{4\leq n\leq m-3}\frac{m!}{n!(m-n)!}C_{10}L^{n-3}(n-4)!L^{m-n-2}(m-n-3)!\\
   &\leq C_{12}L^{m-5}(m-3)!\sum_{4\leq n\leq m-3}\frac{m^3}{n^4(m-n)^3}\\
   &\leq C_{13}L^{m-2}(m-3)!.
   \end{split}
\end{eqnarray}
Inserting \reff{est f2 a} and \reff{est f2 b} into \reff{est f2 2}, we get
\begin{eqnarray*}
  \sum_{0\leq n\leq m}{m\choose n}\norm{\p_q^n (h-d)}_{0,\mu}\norm{\p_q^{m-n}h_p^3}_{0,\mu}\leq (C_{11}+C_{13})L^{m-2}(m-3)!.
\end{eqnarray*}
Thus combining the above estimate and \reff{est f2 1}, we obtain, in view of \reff{est f4}, the desired estimate \reff{step3}, by choosing
$C_{3}=C_{10}\norm{\beta}_{0,\mu}+g\norm{\rho}_{1,\mu}(C_{11}+C_{13})+1$.

{\it Step 4)}~ Finally  we  prove
\bel{step4}
  \sum_{i=1}^2\norm{\varphi_i}_{0,\mu}\leq C_{4}L^{m-2}(m-2)!.
\enl

First for $\norm{\varphi_1}_{0,\mu}$, there exists a constant $C_{14}>0$ such that
\begin{eqnarray*}
  \norm{\varphi_1}_{0,\mu}\leq C_{14}L^{m-2}(m-2)!.
\end{eqnarray*}
The proof is similar as that of  \reff{step2} for $\norm{f_1}_{0,\mu}$, so we omit the details.
 
 Next for $\norm{\varphi_2}_{0,\mu}$,  by \reff{algebra}, we write
 \begin{eqnarray}\label{est varphi2}
   \begin{split}
   \norm{\varphi_2}_{0,\mu}&\leq 2\sigma \sum_{1\leq n\leq m}\norm{\p_q^{m-n} h_{qq}}_{0,\mu}\norm{\p_q^n\inner{ h_p^2{(1+h_q^2)^{-3/2}}}}_{0,\mu}.
   \end{split}
 \end{eqnarray}
 By the induction assumption \reff{nomind}, one has
 \begin{equation}\label{3}
  \forall~1\leq n\leq m-2,\quad \norm{\p_q^{m-n} h_{qq}}_{0,\mu}\leq\norm{\p_q^{m-n} h}_{2,\mu}\leq L^{m-n-1}(m-n-2)!,
 \end{equation}
 and
\begin{equation}\label{*}
\forall~3\leq n\leq m,\quad \norm{\p_q^n h_p}_{0,\mu}\leq \norm{\p_q^{n-1} h}_{2,\mu}\leq L^{n-2}(n-3)!.
\end{equation}
In view of \reff{*}, we use Lemma \ref{stab}  with $\ell=2$, $k_0=m$ ,  $H=L$,
$u_1=u_2=h_p$ and $u_3=1$, to conclude
\begin{equation}\label{4}
 \forall~3\leq n\leq m,\quad \norm{\p_q^{n} h^2_p}_{0,\mu}\leq C_{15} L^{n-2}(n-3)!.
\end{equation}

To estimate the norm $\norm{\p_q^n\inner{ h_p^2{(1+h_q^2)^{-3/2}}}}_{0,\mu}$, we use the following lemma and refer to \cite[Lemma 5.2]{LW} for its proof.

\begin{lemma}\label{+stab+}

Let $C_*\geq 1$  be the constant given in Lemma \ref{stab},   and let
$k_0\in\N$ with
$k_0\geq 3$.  Suppose  $\p_q^k u\in
C^{0,\mu}(\bar R)$ for any $k\leq k_0$.  If there
exist  two constants $C_0$ and $ \tilde H$ satisfying
\begin{equation}\label{biglittle}
  C_0\geq C_*\inner{2\big\|(1+u^2)^{-1}\big\|_{2,\mu}+2\big\|(1+u^2)^{-3/2}\big\|_{2,\mu}+\big\|\p_q(u^2)\big\|_{2,\mu}+1}^6
\end{equation}
and
\begin{equation}\label{+biglittle}
\tilde H\geq  2 C^2_0+\big\|\p_q^3\inner{(1+u^2)^{-1}}
 \big\|_{0,\mu}+\big\|\p_q^3\inner{(1+u^2)^{-3/2}}
 \big\|_{0,\mu},
\end{equation}
such that
\begin{equation}\label{+condition3}
 \forall~ 3\leq k\leq k_0,  \quad  \|\p_q^k (u^2)\|_{0,\mu}
  \leq C_0 \tilde H^{k-2}(k-3)!,
\end{equation}
then
\begin{eqnarray}\label{++conclusion}
\forall~ 3\leq k\leq k_0,  \quad\big\|\p_q^k \inner{(1+u^2)^{-3/2}} \big\|_{0,\mu}\leq C_0^2  \tilde H^{k-2}(k-3)!.
\end{eqnarray}

\end{lemma}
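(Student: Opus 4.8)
The plan is to reduce the Gevrey-type bound \reff{++conclusion} to a single induction on the derivative order $k$, driven by the first-order ODE satisfied by the composite function. Writing $w=u^2$ and $b=(1+u^2)^{-3/2}$, a direct computation gives $b_q=-\tfrac32(1+w)^{-5/2}w_q$, hence the algebraic identity
\[
2(1+w)\,b_q+3\,b\,w_q=0.
\]
The virtue of this relation is that it is polynomial in $w$ apart from the single factor $b$, so differentiating it $k-1$ times keeps the nonlinearity tractable: applying $\partial_q^{k-1}$, using the Leibniz rule, and isolating the top-order term $(1+w)\partial_q^k b$ yields
\[
2(1+w)\,\partial_q^k b=-2\sum_{n=1}^{k-1}\binom{k-1}{n}(\partial_q^n w)(\partial_q^{k-n}b)-3\sum_{n=0}^{k-1}\binom{k-1}{n}(\partial_q^n b)(\partial_q^{k-n}w).
\]
Multiplying through by $\tfrac12(1+u^2)^{-1}$ and invoking the Hölder algebra \reff{algebra}, the crucial observation is that $(1+u^2)^{-1}$ enters only as an \emph{undifferentiated} multiplier, so only $\norm{(1+u^2)^{-1}}_{0,\mu}\le\norm{(1+u^2)^{-1}}_{2,\mu}$ is needed; this harmless constant already appears in \reff{biglittle}, and no high-order derivatives of $(1+u^2)^{-1}$ occur on the right-hand side.

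I would then induct on $k$. For the base case $k=3$, the target $\norm{\partial_q^3 b}_{0,\mu}\le C_0^2\tilde H\,(3-3)!=C_0^2\tilde H$ follows immediately from $\tilde H\ge\norm{\partial_q^3((1+u^2)^{-3/2})}_{0,\mu}$ in \reff{+biglittle} together with $C_0\ge1$, the latter being a consequence of \reff{biglittle} since $C_*\ge1$. For the inductive step I assume \reff{++conclusion} for all orders strictly below $k$ and estimate the two sums above after splitting each into a boundary range and a middle range.

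In the boundary range, where $n$ or $k-n$ is at most $3$, one factor is controlled by a fixed norm ($\norm{\partial_q(u^2)}_{2,\mu}$, $\norm{(1+u^2)^{-3/2}}_{2,\mu}$, and the like, all present in \reff{biglittle}) while the other is controlled by the induction hypothesis or by the hypothesis \reff{+condition3}; in particular the leading $n=0$ term $\tfrac32\,b\,\partial_q^k w$ is of this type and carries only a single factor $C_0$. Each such term is bounded by a harmless constant times $C_0^2\tilde H^{k-2}(k-3)!$, and the sixth-power lower bound on $C_0$ in \reff{biglittle} is exactly what absorbs the product of these fixed norms.

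The heart of the argument is the middle range, where both factors are genuinely of high order. There I use the induction hypothesis $\norm{\partial_q^{k-n}b}_{0,\mu}\le C_0^2\tilde H^{k-n-2}(k-n-3)!$ and the hypothesis \reff{+condition3} $\norm{\partial_q^n w}_{0,\mu}\le C_0\tilde H^{n-2}(n-3)!$ (symmetrically in the second sum). After the elementary simplifications $\tfrac{(n-3)!}{n!}\le n^{-3}$, $\tfrac{(k-n-3)!}{(k-1-n)!}\le(k-n)^{-2}$ and $(k-1)!\le k^2(k-3)!$, the binomial against the two factorials collapses to a constant times $\tilde H^{k-4}(k-3)!$ multiplied by $\sum_{n}\tfrac{k^2}{n^3(k-n)^2}$, which is bounded by the one-dimensional specialization of the first convergent sum in Lemma \ref{lemSum}. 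Each middle term thus carries the factor $C_0^3\tilde H^{k-4}=(C_0^3\tilde H^{-2})\,\tilde H^{k-2}$, and here the requirement $\tilde H\ge2C_0^2$ in \reff{+biglittle} is decisive: it gives $C_0^3\tilde H^{-2}\le\tfrac1{4C_0}$, so that with $C_0$ as large as \reff{biglittle} demands the whole middle range contributes at most $\tfrac12 C_0^2\tilde H^{k-2}(k-3)!$ and the induction closes with the constant $C_0^2$ intact. I expect the main obstacle to be precisely this bookkeeping: arranging the binomial and factorial factors so that they reduce cleanly to Lemma \ref{lemSum}, and tuning the boundary/middle thresholds together with $C_0$ and $\tilde H$ so that every contribution is dominated by the same $C_0^2\tilde H^{k-2}(k-3)!$. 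The composition structure encoded in the linear ODE is what makes this derivative-order recursion, rather than a full Faà di Bruno expansion, feasible.
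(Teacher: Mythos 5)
Your proposal is correct in substance, but it is a genuinely different argument from the one the paper relies on. The paper does not actually prove Lemma \ref{+stab+}: it defers to \cite[Lemma 5.2]{LW}, and the shape of the hypotheses shows what that proof does. There one differentiates the identities $\p_q\inner{(1+u^2)^{-1}}=-\inner{(1+u^2)^{-1}}^2\p_q(u^2)$ and $\p_q\inner{(1+u^2)^{-3/2}}=-\tfrac32(1+u^2)^{-1}(1+u^2)^{-3/2}\p_q(u^2)$, each a product of \emph{three} functions, and runs two successive inductions powered by the triple-product Lemma \ref{stab}; this is precisely why \reff{biglittle} involves the constant $C_*$ and a sixth power, and why \reff{+biglittle} contains the third derivative of $(1+u^2)^{-1}$ as well as of $(1+u^2)^{-3/2}$ (base cases of the two inductions). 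Your route --- rewriting the relation as $2(1+w)b_q+3bw_q=0$ so that $(1+u^2)^{-1}$ only ever appears undifferentiated --- collapses all of this into a single recursion for the derivatives of $b$ alone: you never invoke Lemma \ref{stab}, never estimate high derivatives of $(1+u^2)^{-1}$, and in fact never use the $\big\|\p_q^3\inner{(1+u^2)^{-1}}\big\|_{0,\mu}$ part of \reff{+biglittle} or the specific provenance of $C_*$. Your proof is therefore more economical and establishes the lemma under strictly weaker hypotheses; what the two-stage route of \cite{LW} buys is uniform reuse of the product machinery already set up for the rest of the argument, together with the analogous bounds for $(1+u^2)^{-1}$ as a by-product.

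One caveat on the bookkeeping, which you yourself flag as the main obstacle. The lemma must hold for \emph{every} admissible pair $(C_0,\tilde H)$, so you may not assume $C_0$ is large: when the norms of $u$ are small, \reff{biglittle} allows $C_0$ arbitrarily close to $C_*$, about which nothing beyond $C_*\geq1$ is known. In that regime, invoking the $8\pi^2$ bound of Lemma \ref{lemSum} verbatim for the middle range is too lossy: tracking the factorial constants, the induction would need roughly $C_0^3\gtrsim 10^3\,\norm{(1+u^2)^{-1}}_{0,\mu}$, which can fail (take the norms of order $10^{-1}$, so that $C_0$ may be of order $1$). The fix is immediate with the splitting you already chose: on your middle range both indices satisfy $n\geq4$ and $k-n\geq4$, and then the same elementary computation gives $\sum_{4\leq n\leq k-4}k^2 n^{-3}(k-n)^{-2}\leq 1$ rather than $8\pi^2$, reducing the requirement to $C_0^3\geq c\,\norm{(1+u^2)^{-1}}_{0,\mu}$ with $c$ of order $10$, which does follow from $C_0\geq\inner{2\norm{(1+u^2)^{-1}}_{2,\mu}+1}^6$. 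The finitely many boundary terms need the same care, and they close for the same two reasons: every term carries the overall factor $\norm{(1+u^2)^{-1}}_{0,\mu}$ (small exactly when $C_0$ is not large), and each term has spare decay, either $\tilde H^{-1}\leq \tfrac12 C_0^{-2}$ or fixed norms dominated by fractional powers of $C_0$ through the sixth power in \reff{biglittle}. With that sharpening your induction closes as claimed.
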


Since by the induction assumption \reff{nomind}, one has
\begin{equation*}
\forall~3\leq n\leq m,\quad \norm{\p_q^n h_q}_{0,\mu}\leq \norm{\p_q^{n-1} h}_{2,\mu}\leq L^{n-2}(n-3)!,
\end{equation*}
applying Lemma \ref{stab} with $\ell=2$, $u_1=u_2=h_q$, $k_0=m$ and $H=L$ gives 
\begin{eqnarray*}
  \forall~3\leq n\leq m,\quad \norm{\p_q^n h^2_q}_{0,\mu}\leq C_*(2\norm{h_q}_{2,\mu}+1)^2 L^{n-2}(n-3)!\leq C_0 L^{n-2}(n-3)!,
\end{eqnarray*}
where in the last inequality we chose
\begin{eqnarray}\label{deterC0}
  C_0\geq C_*\inner{2\big\|(1+h_q^2)^{-1}\big\|_{2,\mu}+2\big\|(1+h_q^2)^{-3/2}\big\|_{2,\mu}+\big\|\p_q(h_q^2)\big\|_{2,\mu}+2\norm{h_q}_{2,\mu}+1}^6.
\end{eqnarray}
If we choose $L$ large enough such that
\begin{eqnarray*}
  L\geq  2 C^2_0+\big\|\p_q^3\inner{(1+h_q^2)^{-1}}
 \big\|_{0,\mu}+\big\|\p_q^3\inner{(1+h_q^2)^{-3/2}}
 \big\|_{0,\mu},
\end{eqnarray*}
then we can use the above lemma \ref{+stab+}, with $u=h_q$, $k_0=m$ and $\tilde H=L$, to get
\begin{eqnarray}\label{parn3/2}
  \forall~3\leq n\leq m,  \quad\big\|\p_q^n \inner{(1+h_q^2)^{-3/2}} \big\|_{0,\mu}\leq C_0^2  L^{n-2}(n-3)!.
\end{eqnarray}
Now in view of \reff{4} and \reff{parn3/2}, applying Lemma \ref{stab}, with $\ell=2$, $u_1=C_{15}^{-1}h_p^2$ and $u_2=C_0^{-2}(1+h_q^2)^{-3/2}$, yields
\begin{eqnarray}\label{5a}
  \forall~3\leq n\leq m,  \quad\big\|\p_q^n \inner{h_p^2(1+h_q^2)^{-3/2}} \big\|_{0,\mu}\leq C_{16}L^{n-2}(n-3)!.
\end{eqnarray}

With \reff{3} and \reff{5a} in hand, we can write \reff{est varphi2} as
\begin{eqnarray*}
   \begin{split}
   \norm{\varphi_2}_{0,\mu}
   &\leq 2\sigma \inner{\sum_{1\leq n\leq 2}+\sum_{3\leq n\leq m-2}+\sum_{m-1\leq n\leq m}}\norm{\p_q^{m-n} h_{qq}}_{0,\mu}\norm{\p_q^n\inner{ h_p^2{(1+h_q^2)^{-3/2}}}}_{0,\mu},
   \end{split}
 \end{eqnarray*}
 and argue as in the previous steps to get
 \begin{eqnarray*}
   \begin{split}
   \norm{\varphi_2}_{0,\mu}&\leq C_{17}L^{m-2}(m-2)!.
   \end{split}
 \end{eqnarray*}
 Choosing $C_4=C_{14}+C_{17}$, we derive the estimate \reff{step4}.

Now we come back to the proof of Proposition \ref{propassu}.  Choose
$L$ in such a way that
\begin{equation*}
 L\geq\mathcal C\inner{C_1+ C_2+ C_3+ C_4}+2 C^2_0+\big\|\p_q^3\inner{(1+h_q^2)^{-1}}
 \big\|_{0,\mu}+\big\|\p_q^3\inner{(1+h_q^2)^{-3/2}}
 \big\|_{0,\mu}+\norm{\p_q^2 h}_{2,\mu}+1
\end{equation*}
with $\mathcal C, C_1,\cdots,C_4,C_0$ the constants given in \reff{par m h},
\reff{step1}, \reff{step2}, \reff{step3}, \reff{step4} and \reff{deterC0}.   Then
combining
\reff{par m h},  \reff{step1}, \reff{step2}, \reff{step3} and \reff{step4},  we have,
 \begin{eqnarray*}
    \norm{\p_q^m h}_{2,\mu}\leq \mathcal C\inner{C_1+ C_2+ C_3+ C_4} L^{m-2}(m-2)!\leq L^{m-1}(m-2)!.
 \end{eqnarray*}
 The validity of $(E_m)$ follows. Thus  the proof of
 Proposition \ref{propassu} is complete.
\end{proof}

\section{Gevrey regularity of the pseudo-stream function}\label{sec4}

Let $G^s\inner{[p_0,  0]}$,  $s\geq 1$,  be the Gevrey class; see
Definition \ref{def gevrey} of Gevrey function.   In
this section we assume  $\beta,\rho\in G^s\inner{[p_0,  0]}$.  Then by
the alternative characterization  of Gevrey function,   for any $p\in[p_0, 0]$ we can
find a neighborhood $U_p$ of $p$ and a constant $M_p$ such that
\[
\forall~k\in \mathbb N,\quad \sup_{t\in U_p\cap [p_0,0]} \abs{\p_p^k
  \beta(t)}\leq M_p^{k+1}(k!)^s.
\]
Note $[p_0,0]$ is compact in $\mathbb R$; this allows us to find a constant $M$ such
that
\begin{equation}\label{GevCon}
\forall~k\in \mathbb N,\quad \sup_{p\in[p_0, 0]}\abs{\p_p^k
  \beta(p)}\leq  M^{k+1}(k!)^s.
\end{equation}
Similarly we can find a constant $N$ such that
\begin{equation}\label{Gev rho}
\forall~k\in \mathbb N,\quad \sup_{p\in [p_0,0]} \abs{\p_p^k
  \rho(p)}\leq N^{k+1}(k!)^s.
\end{equation}

We prove now  Gevrey regularity of the pseudo-stream function,
i.e., Theorem \ref{thm2}. In view of Proposition \ref{reserv},  it suffices to
show the following result for the height function $h(q,p)$.

\begin{proposition}\label{prophpq}
 Let $\beta,\rho\in G^s\inner{[p_0,  0]}$ with $s\geq 1$,  and let $h\in C^{2,\mu}(\bar
 R)$ be a solution to \reff{Equh1}-\reff{Equh3}.
Then there exist two  constants $ L_1, L_2$ with $L_2\geq L_1\geq 1$ ,  such
that for any $m\geq 2$  we have the
following estimate
   \begin{eqnarray*}
     (F_m):\qquad \forall~\alpha=(\alpha_1,\alpha_2)\in\N^2,~ \abs\alpha=m,\quad  \norm{\p^{\alpha}h}_{2}\leq
     L_1^{\alpha_1-1}L_2^{\alpha_2} [(\abs\alpha-2)!]^s.
   \end{eqnarray*}
Recall $\norm{\cdot}_2$ stands for the  H\"{o}lder norm $\norm{\cdot}_{C^{2,0}(\bar R)}$. Thus $h\in
 G^s(\bar R)$;  in particular if $s=1$ then $h$ is analytic
 in $\bar R$.
\end{proposition}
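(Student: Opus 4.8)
The plan is to establish the estimate $(F_m)$ by a lexicographic induction on the pair $(\abs\alpha,\alpha_2)$, mirroring the quantitative scheme used for $(E_m)$ in Proposition \ref{propassu} but now carrying the extra $p$-weight $L_2$ and the Gevrey factorial $\com{(\abs\alpha-2)!}^s$. Two distinct weights, with $L_2\geq L_1$, are needed because $\p_q$ and $\p_p$ play asymmetric roles in \reff{Equh1}. The base of the induction is the pure-$q$ case $\alpha_2=0$: since $\p_q$ never differentiates the coefficients $\beta(p)$ and $\rho(p)$, the argument of Proposition \ref{propassu} applies verbatim and delivers the \emph{analytic} bound $\norm{\p_q^m h}_{2,\mu}\leq L_1^{m-1}(m-2)!$, which already dominates the required $L_1^{m-1}\com{(m-2)!}^s$ since $s\geq1$; hence it suffices to take $L_1\geq L$, with $L$ the constant from Proposition \ref{propassu}, and no Gevrey information on $\beta,\rho$ is consumed here.

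The device for raising the $p$-order is that the coefficient of $h_{pp}$ in \reff{Equh1} is $1+h_q^2\geq1$, so the equation may be solved algebraically and pointwise on $\bar R$ for the top $p$-derivative,
\begin{equation*}
  h_{pp}=\frac{1}{1+h_q^2}\inner{2h_ph_qh_{pq}-h_p^2h_{qq}+g(h-d)\rho_p h_p^3-\beta(p)h_p^3}.
\end{equation*}
That all mixed derivatives extend continuously to $\bar R$, so that this identity may be differentiated up to the boundary, follows from Remark \ref{remReg} together with the identity itself. Applying $\p_q^{\alpha_1}\p_p^{\alpha_2-2}$ and the Leibniz rule expresses $\p^\alpha h$ through terms whose highest factor carries strictly fewer than $\alpha_2$ $p$-derivatives; the principal one is a bounded multiple of $\p_q^{\alpha_1+1}\p_p^{\alpha_2-1}h=\p^{(\alpha_1+1,\alpha_2-1)}h$, of the same total order $\abs\alpha$ but smaller $p$-order, hence controlled by the inner induction hypothesis with a bound proportional to $L_1^{\alpha_1}L_2^{\alpha_2-1}\com{(\abs\alpha-2)!}^s$. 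As this lies a factor $L_1/L_2\leq1$ below the target $L_1^{\alpha_1-1}L_2^{\alpha_2}\com{(\abs\alpha-2)!}^s$, choosing $L_2\geq L_1$ large enough supplies exactly the room to absorb the harmless constants generated by the smooth factors $2h_ph_q/(1+h_q^2)$ and their derivatives; these nonlinear and composite quantities (the powers $h_p^3$ and the $(1+h_q^2)^{-3/2}$-type terms) are kept under Gevrey control by \reff{algebra}, Lemma \ref{stab} and Lemma \ref{+stab+}, the combinatorial bounds (i)--(iii), and Lemma \ref{lemSum}, exactly as in the estimates for $f_1,f_2,\varphi_2$ in the proof of Proposition \ref{propassu}.

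The genuinely new contributions, and the sole place where the index $s$ enters, are the Leibniz terms in which the $p$-derivatives land on $\beta(p)$ and $\rho_p$, such as $\binom{\alpha_2-2}{j}\beta^{(j)}(p)\,\p_q^{\alpha_1}\p_p^{\alpha_2-2-j}(h_p^3)$ and its $\rho^{(j+1)}$ analogue. Here the Gevrey hypotheses \reff{GevCon}--\reff{Gev rho} furnish factors $M^{j+1}(j!)^s$ and $N^{j+2}((j+1)!)^s$, and the decisive step is to recombine these $(j!)^s$ factors with the factorials of the form $(\ell!)^s$ produced by the inductive bounds on the surviving derivatives of $h$; inequality (ii), $\inner{m!}^{s-1}\inner{n!}^{s-1}\leq\com{(m+n)!}^{s-1}$, is precisely what collapses them into a single $\com{(\abs\alpha-2)!}^s$ on the right-hand side. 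I expect the main obstacle to be exactly this two-weight bookkeeping: checking that after every trade of a $p$-derivative for a $q$-derivative, and after all the $\beta$- and $\rho$-factors are absorbed, the constant $L_1^{\alpha_1-1}L_2^{\alpha_2}\com{(\abs\alpha-2)!}^s$ closes uniformly in both $\alpha_1$ and $\alpha_2$. As in Proposition \ref{propassu}, this is arranged by first fixing $L_1$ from the pure-$q$ step and the quantities in \reff{deterC0}, and then taking $L_2\geq L_1$ large enough to dominate $M$, $N$, $\sigma$, $1/\inf_{\bar R}h_p$ from \reff{hp bounded}, and the finitely many harmless constants; the case $s=1$ then yields analyticity of $h$ in $\bar R$.
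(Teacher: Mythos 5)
Your overall strategy coincides with the paper's: a double induction (outer on $\abs\alpha$, inner on $\alpha_2$), the base case $\alpha_2=0$ imported verbatim from Proposition \ref{propassu} with $L_1\geq L$ (correctly noting that $s\geq1$ and that no Gevrey information on $\beta,\rho$ is consumed there), the equation \reff{Equh1} used pointwise to trade the top $p$-derivatives for terms of lower $p$-order, two weights $L_2\geq L_1$ to absorb the loss $L_1/L_2$, and inequality (ii) to recombine the factorials from \reff{GevCon}--\reff{Gev rho} with those from the inductive bounds. One bookkeeping point you elide: $(F_{m,n})$ asks for the $C^2$-norm of $\p^\alpha h$, and the paper first reduces this, at the cost of $L_2\geq 8L_1$, to a single $C^0$-bound on $\p_q^{m-n}\p_p^{n+2}h$, which is then obtained from the differentiated equation; your identity applied with $\alpha_2+2$ in place of $\alpha_2$ accomplishes the same thing, so this is minor.

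The genuine gap is in your key device. You solve \reff{Equh1} for $h_{pp}$ by dividing by $1+h_q^2$, so every Leibniz term in $\p_q^{\alpha_1}\p_p^{\alpha_2-2}$ of the right-hand side carries a factor $\p^\gamma\bigl((1+h_q^2)^{-1}\bigr)$, and closing the induction then requires two-weight bounds of the form $\norm{\p^\gamma\inner{(1+h_q^2)^{-1}}}_0\leq C\,L_1^{\gamma_1-j}L_2^{\gamma_2}[(\abs\gamma-j-1)!]^s$ for \emph{mixed} derivatives. The tools you cite for this---Lemma \ref{stab} and Lemma \ref{+stab+}---do not deliver it: both are stated for pure $\p_q^k$ derivatives in H\"older norms with a single weight (they belong to the Section \ref{sec3} argument), while the two-weight lemma actually operative in Section \ref{sec4}, Lemma \ref{+stab}, covers only products $uv$, $uvw$, never reciprocals or composite functions. (Relatedly, the $(1+h_q^2)^{-3/2}$ terms you mention live only in the boundary condition \reff{Equh2}, which never enters the Gevrey induction at all; the boundary condition is used only inside Proposition \ref{propassu} for the base case, and the inner inductive step is purely pointwise, with no Schauder estimate.) The paper avoids this issue entirely by \emph{not} dividing: it keeps $(1+h_q^2)\p^\alpha h_{pp}$ intact, uses the pointwise inequality $\norm{\p^\alpha h_{pp}}_0\leq\norm{(1+h_q^2)\p^\alpha h_{pp}}_0$ (valid since $1+h_q^2\geq1$), and moves the commutator terms ${\alpha\choose\gamma}(\p^\gamma h_q^2)(\p^{\alpha-\gamma}h_{pp})$, $\gamma\neq0$, to the right-hand side, where only product estimates (Lemma \ref{+stab} combined with Lemma \ref{lemSum}) are required. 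Your route could be repaired, but only by formulating and proving a new two-weight, mixed-derivative reciprocal lemma in the style of Lemma \ref{+stab+}; as written, that step is unsupported.
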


\begin{remark}
 As to be seen in the proof, the constants $L_1, L_2$ depend on  the
 constant $L$  given in Proposition \ref{propassu} and the constants $M,N$
 in \reff{GevCon} and \reff{Gev rho},  but
 independent of the order $m$ of derivative.
\end{remark}

\begin{remark}\label{+remReg}
Note   $\beta,\rho\in G^s([p_0,0])\subset C^\infty([p_0,0])$.  By  Remark
\ref{remReg}   we see $\p_q h \in C^{2,\mu}(\bar R)$.  Then
differentiating the
   equation \reff{Equh1} with respect to $p$,    we can  obtain $h\in
   C^{3, \mu}(\bar R)$;  see
   \cite{MR2027299} for details.    Repeating  this procedure gives  $ h\in
  C^{k,\mu}(\bar R)$  for any $k \in \mathbb N$,  since $\beta,\rho\in C^\infty([p_0,0])$.
\end{remark}

To confirm the last statement in the above proposition \ref{prophpq},
we   choose $C$ in such a way
that
\[
     C=\max\set{L_1, L_2,  \norm{h}_{1,\mu}},
\]
which,  along with the estimate $(F_m)$ with $m\geq 2$   in Proposition \ref{prophpq},  yields
\begin{eqnarray*}
\forall~\alpha\in\mathbb N^2, \quad \max_{(q,p)\in
  \bar R}|\partial^\alpha{h}(q,p)| \leq C^{\abs\alpha+1}(\abs \alpha!)^s .
\end{eqnarray*}
This gives $h\in
 G^s(\bar R)$.

In order to prove   Proposition \ref{prophpq}, we need the following
technical lemma and will present  its proof at the end of this section.

\begin{lemma}\label{+stab}

Let $s\geq 1$, and $H_1$ and $H_2$ be two  constants with $H_2\geq
H_1\geq 1$. Suppose that $\alpha_0$ is a given multi-index with $\abs{\alpha_0}\geq3$, and $u,v,w\in
C^{\abs{\alpha_0},\mu}(\bar R)$.  For $j=0,1,2$, denote
\begin{equation*}
 \mathcal{A}_{j}=\set{~f\in C^{|\alpha_0|,\mu}(\bar R)~\big|~ \forall~\alpha=(\alpha_1,\alpha_2)\leq\alpha_0,~ \abs{\alpha}\geq j+1,  \quad  \|\p^\alpha f\|_0
  \leq H_1^{\alpha_1-j}H_2^{\alpha_2}[(\abs\alpha-j-1)!]^s}.
\end{equation*}
Then there exists a constant $c_*$, depending only on the $C^{2,0}$-norms
of $u, v$ and $w$, but independent of $\alpha_0$,  such that
\begin{enumerate}[(a)]
  \item\label{conc1} if $u\in \mathcal{A}_2$ and $v\in \mathcal{A}_1$,
    then $c_*^{-1}uv\in \mathcal{A}_1$, that is,
  \[
   \forall~\alpha=(\alpha_1,\alpha_2)\leq\alpha_0,~ \abs{\alpha}\geq 2,  \quad \big\|\p^\alpha\inner{uv}\big\|_0\leq c_* H_1^{\alpha_1-1}H_2^{\alpha_2}[(\abs\alpha-2)!]^s;
  \]
 if additionally  $w\in \mathcal A_2$ then $c_*^{-1}uvw\in
 \mathcal{A}_1$;
  \item\label{conc2}  if $u\in \mathcal{A}_2$ and $v,w\in \mathcal{A}_1$, then $c_*^{-1}uvw\in \mathcal{A}_0$;
  \item\label{conc3}  if $u,v\in \mathcal{A}_2$ and $w\in
    \mathcal{A}_0$, then $c_*^{-1}uvw\in \mathcal{A}_0$;
 \item\label{conc4}  if $u, v, w\in \mathcal{A}_2$, then $c_*^{-1}H_1
   uvw\in \mathcal{A}_1$, that is,
\[
   \forall~\alpha=(\alpha_1,\alpha_2)\leq\alpha_0,~ \abs{\alpha}\geq 2,  \quad \big\|\p^\alpha\inner{uvw}\big\|_0\leq c_* H_1^{\alpha_1-2}H_2^{\alpha_2}[(\abs\alpha-2)!]^s;
  \]
  \item \label{conc5}if $u\in \mathcal{A}_3$ and $v\in \mathcal{A}_2$,
    then $\tilde c_*^{-1}uv\in \mathcal{A}_2$, with $\tilde c_*$ a constant depending  on the $C^{3,0}$-norm of $u$ and $C^{2,0}$-norm of $v$.
\end{enumerate}

\end{lemma}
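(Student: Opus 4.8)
The plan is to estimate every derivative of the products by Leibniz's formula and to bound the resulting sum term by term, exactly in the spirit of the proof of Lemma~\ref{stab}, but now keeping the two components of $\alpha=(\alpha_1,\alpha_2)$ separate so as to distinguish the roles of $H_1$ and $H_2$. I would treat part~\ref{conc1} as the prototype and deduce the remaining assertions from it together with the matching combinatorial sums of Lemma~\ref{lemSum}. For a multi-index $\alpha\leq\alpha_0$ with $\abs\alpha\geq2$, Leibniz's formula gives
\[
  \p^\alpha(uv)=\sum_{\beta\leq\alpha}\binom{\alpha}{\beta}\,\p^\beta u\,\p^{\alpha-\beta}v,
\]
and since $\norm{\cdot}_0$ is the sup-norm one has $\norm{\p^\beta u\,\p^{\alpha-\beta}v}_0\leq\norm{\p^\beta u}_0\norm{\p^{\alpha-\beta}v}_0$. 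I would split the range of $\beta$ into the two boundary zones $\abs\beta\leq2$ and $\abs{\alpha-\beta}\leq1$, and the bulk zone $\abs\beta\geq3,\ \abs{\alpha-\beta}\geq2$.

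In the bulk zone I would insert $\norm{\p^\beta u}_0\leq H_1^{\beta_1-2}H_2^{\beta_2}[(\abs\beta-3)!]^s$ (from $u\in\mathcal A_2$) and $\norm{\p^{\alpha-\beta}v}_0\leq H_1^{\alpha_1-\beta_1-1}H_2^{\alpha_2-\beta_2}[(\abs{\alpha-\beta}-2)!]^s$ (from $v\in\mathcal A_1$). The powers of $H_2$ add up to $\alpha_2$, while the powers of $H_1$ add up to $\alpha_1-3\leq\alpha_1-1$, which is harmless since $H_1\geq1$. For the factorials I would peel off the $(s-1)$-st powers and combine them by inequality~(ii), namely $[(\abs\beta-3)!]^{s-1}[(\abs{\alpha-\beta}-2)!]^{s-1}\leq[(\abs\alpha-5)!]^{s-1}\leq[(\abs\alpha-2)!]^{s-1}$, and bound the binomial coefficient by inequality~(i), $\binom{\alpha}{\beta}\leq\binom{\abs\alpha}{\abs\beta}$. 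The remaining linear factorials then reorganize, after writing $\abs\alpha!=\abs\alpha(\abs\alpha-1)(\abs\alpha-2)!$, into
\[
  \frac{\abs\alpha!}{\abs\beta!\,(\abs\alpha-\abs\beta)!}\,(\abs\beta-3)!\,(\abs{\alpha-\beta}-2)!\leq C\,(\abs\alpha-2)!\,\frac{\abs\alpha^2}{\abs\beta^3(\abs\alpha-\abs\beta)^2},
\]
so that summing over $\beta$ and invoking the first estimate of Lemma~\ref{lemSum} produces the factor $[(\abs\alpha-2)!]^s$ with a universal constant. The two boundary zones contain only finitely many multi-indices $\beta$ (independently of $\alpha_0$), on which the low-order factor is controlled by $\norm{u}_2$ or $\norm{v}_2$, the high-order factor already satisfies the class bound, and the binomial coefficient is a polynomial in $\alpha$ of degree $\leq2$; a direct count shows these are also dominated by $c_*H_1^{\alpha_1-1}H_2^{\alpha_2}[(\abs\alpha-2)!]^s$ with $c_*$ depending only on $\norm{u}_2,\norm{v}_2$. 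This proves part~\ref{conc1}.

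The remaining parts I would obtain by iterating part~\ref{conc1} and by repeating the same splitting with the matching sum of Lemma~\ref{lemSum}; the guiding heuristic is that a two-factor product of an $\mathcal A_a$ and an $\mathcal A_b$ function lands in $\mathcal A_{a+b-2}$ up to a constant. Thus for the three-factor addendum to part~\ref{conc1} I first put $uv\in c_*\mathcal A_1$ and then multiply by $w\in\mathcal A_2$ via part~\ref{conc1} again. Part~\ref{conc2} follows by forming $uv\in c_*\mathcal A_1$ and then applying the $\mathcal A_1\times\mathcal A_1$ estimate, whose bulk factor, after extracting $(\abs\alpha-1)!$, is governed by the second estimate of Lemma~\ref{lemSum}; part~\ref{conc3} follows from two applications of the $\mathcal A_2\times\mathcal A_0$ estimate, governed by the third estimate. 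Part~\ref{conc4}, with all three factors in $\mathcal A_2$, requires the sharper accounting: carrying the full power $H_1^{\alpha_1-4}$ through the $\mathcal A_2\times\mathcal A_2$ step (whose combinatorial factor $\abs\alpha^2/(\abs\beta^3(\abs\alpha-\abs\beta)^3)$ is dominated by the first estimate) rather than relaxing it to $H_1^{\alpha_1-1}$ leaves the surplus $H_1^{-1}$ after the third multiplication that is exactly the $H_1$ displayed in the statement. Finally part~\ref{conc5}, being a two-factor $\mathcal A_3\times\mathcal A_2$ product, is handled precisely as part~\ref{conc1}, but with bulk thresholds $\abs\beta\geq4,\ \abs{\alpha-\beta}\geq3$, boundary factors controlled by $\norm{u}_3$ and $\norm{v}_2$, and the fourth estimate of Lemma~\ref{lemSum} replacing the first.

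The main obstacle, and where I would spend the most care, is the uniform bookkeeping. In each of the five cases one must check that the $H_1$- and $H_2$-exponents emerging from the factors never exceed the target exponents, that the $(s-1)$-st powers of the factorials assemble through inequality~(ii) into exactly $[(\abs\alpha-j-1)!]^{s-1}$ for the correct target index $j$, and that the leftover rational expression in $\abs\alpha,\abs\beta,\abs\alpha-\abs\beta$ matches one of the four sums that Lemma~\ref{lemSum} is tailored to bound. Equally delicate is verifying that the finitely many boundary terms are genuinely independent of $\alpha_0$, so that $c_*$ (respectively $\tilde c_*$) depends only on the asserted $C^{2,0}$- (respectively $C^{3,0}$-) norms; this uniformity is what makes the estimates applicable for arbitrarily large $\abs{\alpha_0}$ in the proof of Proposition~\ref{prophpq}.
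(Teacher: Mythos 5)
Your proposal is correct and takes essentially the same approach as the paper: for part (\ref{conc5}) — the only part the paper proves in detail, parts (\ref{conc1})--(\ref{conc4}) being delegated to [LW, Lemma 4.4] — your Leibniz expansion with bulk thresholds $\abs\beta\geq 4$, $\abs{\alpha-\beta}\geq 3$, inequality (ii) for assembling the $(s-1)$-st powers of factorials, and the fourth estimate of Lemma \ref{lemSum} reproduce exactly the paper's $I_1,I_2,I_3$ decomposition. Your reconstruction of parts (\ref{conc1})--(\ref{conc4}) by the same zone-splitting, with the matching sums of Lemma \ref{lemSum}, boundary terms absorbed into factorial ratios, and the sharper $H_1$-exponent bookkeeping in part (\ref{conc4}), is likewise sound and uniform in $\alpha_0$.
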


Now we prove Proposition \ref{prophpq}.

\begin{proof}[Proof of Proposition \ref{prophpq}]
In view of Remark \ref{+remReg} we may assume  that  $ h\in
  C^{k,\mu}(\bar R)$ for any $k\in\N$.  We now use induction on $m$ to
  prove the estimate $(F_m)$.  First for $m=2$,
   $(F_m)$ obviously holds by  choosing  $ L_1, L_2$ in such a way
   that
 \begin{equation}\label{l1l21}
   L_2\geq L_1\geq \norm{h}_{4}+1.
\end{equation}
Next let $m\geq 3$  and assume that
   $(F_j)$ holds for any $j$ with $2\leq j\leq m-1$, that is,
\begin{equation}\label{fm1}
\forall~\gamma=(\gamma_1,\gamma_2),~ 2\leq
\gamma_1+\gamma_2\leq m-1, \quad \norm{\p^{\gamma}h}_{2}\leq L_1^{\gamma_1-1}
L_1^{\gamma_2}[(\abs\gamma-2)!]^s.
\end{equation}
We have to prove the validity of $(F_m)$.  This is equivalent to show
the following estimate
\begin{equation}\label{fmn}
(F_{m,n}): \qquad\norm{\p_q^{m-n}\p_p^{n}h}_{2}\leq L_1^{m-n-1}
L_2^{n}[(m-2)!]^s
\end{equation}
holds for all $n$ with $0\leq n\leq m.$

In what follows we use induction on $n$ to show \reff{fmn} with fixed
$m\geq 3$.
Firstly note that $s\geq 1$, and thus from Proposition \ref{propassu} we see that $(F_{m,0})$ holds
   if we choose
\begin{equation}\label{l1l22}
L_1\geq L.
\end{equation}   Next let $1\leq n\leq m$ and  assume that $(F_{m,i})$
holds for all $i$ with $0\leq
   i\leq n-1$, that is,
\begin{equation}\label{fn1}
\forall~0\leq i\leq n-1,\quad \norm{\p_q^{m-i}\p_p^i h}_{2}\leq L_1^{m-i-1}
L_2^{i}[(m-2)!]^s.
\end{equation}
We have to show $(F_{m,n})$ holds as well, i.e., to prove that
   \begin{equation}\label{hqmn}
     \norm{\p_q^{m-n}\p_p^n h}_{2}\leq L_1^{m-n-1}L_2^{n}[(m-2)!]^s.
   \end{equation}
To do so, we firstly compute, with $1\leq n\leq m$,
   \begin{eqnarray*}
     \norm{\p_q^{m-n}\p_p^n h}_{2}&\leq&\norm{\p_q^{m-n}\p_p^n h}_{1}+\norm{\p_q^{m-n+2}\p_p^{n} h}_{0}+\norm{\p_q^{m-n+1}\p_p^{n+1} h}_{0}+\norm{\p_q^{m-n}\p_p^{n+2} h}_{0}\\&\leq&\norm{\p_q^{m-n}\p_p^{n-1} h}_{2}+2\norm{\p_q^{m-(n-1)}\p_p^{n-1} h}_{2}+\norm{\p_q^{m-n}\p_p^{n+2} h}_{0}.
   \end{eqnarray*}
  The induction assumptions \reff{fm1} and \reff{fn1} yield
   \begin{eqnarray*}
     \norm{\p_q^{m-n}\p_p^{n-1} h}_{2}+2\norm{\p_q^{m-(n-1)}\p_p^{n-1} h}_{2}
       &\leq& L_1^{m-n-1}L_2^{n-1}[(m-3)!]^s+2L_1^{m-n}L_2^{n-1}[(m-2)!]^s\\
       &\leq& L_2^{-1}(1+2L_1) L_1^{m-n-1}L_2^{n}[(m-2)!]^s\\
       &\leq& \frac{1}{2}L_1^{m-n-1}L_2^{n}[(m-2)!]^s,
   \end{eqnarray*}
   where in the last inequality we choose
   \begin{equation}\label{l1l23}
       L_2\geq 8L_1\geq 8.
\end{equation}
 Accordingly, in order to obtain
   \reff{hqmn}, it  suffices to prove
\begin{equation}\label{+hqmn}
     \norm{\p_q^{m-n}\p_p^{n+2} h}_{0}\leq  \frac{1}{2} L_1^{m-n-1}L_2^{n}[(m-2)!]^s.
   \end{equation}
The rest is occupied by the proof of the above estimate.

From now on we fix $m$  and $n$ with $m\geq 3$ and $1\leq n\leq m$,  and denote $\alpha=(\alpha_1,\alpha_2)=(m-n,n)$.  Applying $\p^{\alpha}=\p_q^{m-n}\p_p^n$ on both sides of the equation \reff{Equh1} gives
   \begin{eqnarray*}
   \begin{split}
     (1+h_q^2)(\p^\alpha h_{pp})=&-\sum_{\gamma\leq \alpha,\gamma\neq 0}{\alpha\choose\gamma}(\p^\gamma h_q^2)(\p^{\alpha-\gamma}h_{pp})+2\p^{\alpha}(h_p h_q h_{qp})-\p^{\alpha}(h_p^2h_{qq})\\
     &+g\p^{\alpha}\inner{(h-d)\rho_ph_p^3}-\p^{\alpha}\inner{\beta h_p^3},
     \end{split}
   \end{eqnarray*}
   which implies
   \begin{eqnarray*}
     \norm{(1+h_q^2)(\p^\alpha h_{pp})}_{0}&\leq& \sum_{\gamma\leq \alpha,\gamma\neq 0}{\alpha\choose\gamma}\norm{\p^\gamma h_q^2}_{0}\norm{\p^{\alpha-\gamma}h_{pp}}_{0}+2\norm{\p^{\alpha}(h_p h_q h_{qp})}_{0}\\[3pt]
     &&+\norm{\p^{\alpha}(h_p^2h_{qq})}_{0}+g\norm{(h-d)\rho_ph_p^3}_0+\norm{\p^{\alpha}\inner{\beta h_p^3}}_{0}.
   \end{eqnarray*}
   Since
   \begin{equation}\label{9}
     \norm{\p_q^{m-n}\p_p^{n+2} h}_{0}=\norm{\p^\alpha h_{pp}}_{0}\leq\norm{(1+h_q^2)(\p^\alpha h_{pp})}_{0},
   \end{equation}
we obtain,  with  $\alpha=(\alpha_1,\alpha_2)=(m-n,n)$,
\begin{equation}\label{5}
    \begin{split}
     \norm{\p_q^{m-n}\p_p^{n+2} h}_{0}\leq& \sum_{\gamma\leq \alpha,\gamma\neq 0}{\alpha\choose\gamma}\norm{\p^\gamma h_q^2}_{0}\norm{\p^{\alpha-\gamma}h_{pp}}_{0}+2\norm{\p^{\alpha}(h_p h_q h_{qp})}_{0}\\
     &+\norm{\p^{\alpha}(h_p^2h_{qq})}_{0}+g\norm{(h-d)\rho_ph_p^3}_0+\norm{\p^{\alpha}\inner{\beta h_p^3}}_{0}.
    \end{split}
   \end{equation}
We now treat the  terms on the right-hand side through the
following lemmas.

To simplify the notations,  we will use $c_j, j\geq 1$, to denote
suitable {\it harmless constants} larger than $1$.  By harmless constants it
means that these constants are independent of
$m$ and $n$. The following three lemmas, Lemma \ref{lem401}--Lemma \ref{lem403}, have been proved in \cite{LW}. For completeness, we present their proofs here again.

 \begin{lemma}\label{lem401}
   For $\alpha=(\alpha_1,\alpha_2)=(m-n,n)$ with
     $1\leq n\leq m$,  we have
      \begin{equation*}
       \sum_{\gamma\leq \alpha,\gamma\neq
        0}{\alpha\choose\gamma}\norm{\p^\gamma
        h_q^2}_{0}\norm{\p^{\alpha-\gamma}h_{pp}}_{0}\leq c_1 L_1^{\alpha_1-2}L_2^{\alpha_2} [(\abs{\alpha}-2)!]^{s}.
    \end{equation*}
    \end{lemma}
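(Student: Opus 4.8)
The plan is to read the sum as the Leibniz expansion of $\p^\alpha(h_q^2 h_{pp})$ with the single term $\gamma=0$ deleted, and to bound it by combining the product estimates of Lemma \ref{+stab} with the elementary inequalities (i)--(iii) and the multi-index sums of Lemma \ref{lemSum}. Throughout I work in the classes $\mathcal A_0,\mathcal A_1,\mathcal A_2$ of Lemma \ref{+stab}, specialised to $\alpha_0=\alpha=(m-n,n)$, $H_1=L_1$ and $H_2=L_2$. The deletion of $\gamma=0$ is essential: that term is $h_q^2\,\p^\alpha h_{pp}$, which carries the still-unknown quantity $\p^\alpha h_{pp}=\p_q^{m-n}\p_p^{n+2}h$ that \reff{5} keeps on the left-hand side, so the surviving convolution must be shown to be two full orders smaller than the crude bound for $\p^\alpha(h_q^2 h_{pp})$ taken as a whole.

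First I would record the class memberships of the two factors, reading them off from the induction hypotheses \reff{fm1} and \reff{fn1} via the fact that $\norm{\cdot}_2$ dominates two further derivatives. Since $\norm{\p^\gamma h_q}_0=\norm{\p_q^{\gamma_1+1}\p_p^{\gamma_2}h}_0\le\norm{\p_q^{\gamma_1-1}\p_p^{\gamma_2}h}_2$, hypothesis \reff{fm1} gives $\norm{\p^\gamma h_q}_0\le L_1^{\gamma_1-2}L_2^{\gamma_2}[(\abs\gamma-3)!]^s$, i.e. $h_q\in\mathcal A_2$; the finitely many low-order cases ($\abs\gamma\le2$, and the border $\gamma_1=0$ where $\norm{\cdot}_2$ absorbs $\p_p$ rather than $\p_q$) are swept into harmless constants by enlarging $L_1,L_2$ and using $L_2\ge L_1\ge1$. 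Likewise $\norm{\p^{\alpha-\gamma}h_{pp}}_0=\norm{\p_q^{(\alpha-\gamma)_1}\p_p^{(\alpha-\gamma)_2+2}h}_0\le\norm{\p^{\alpha-\gamma}h}_2$, and since $\gamma\neq0$ forces $\abs{\alpha-\gamma}\le m-1$, \reff{fm1} applies and yields $h_{pp}\in\mathcal A_1$. An application of Lemma \ref{+stab}(\ref{conc1}) with $u=v=h_q$ then gives $h_q^2\in\mathcal A_1$.

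Substituting these bounds, the powers of $L_1$ and $L_2$ collapse exactly to $L_1^{\alpha_1-2}L_2^{\alpha_2}$ (or smaller), matching the target prefactor, so it remains to dominate the combinatorial factor $\sum_\gamma\binom\alpha\gamma[(\abs\gamma-2)!]^s[(\abs{\alpha-\gamma}-2)!]^s$ by $c[(\abs\alpha-2)!]^s$. For this I would use inequality (i) to pass to the scalar binomial $\binom{\abs\alpha}{\abs\gamma}$, inequality (ii) to collapse the two $(s-1)$-powers into $[(\abs\alpha-4)!]^{s-1}\le[(\abs\alpha-2)!]^{s-1}$, and then split the range of $\gamma$ into a bulk part, where both $\abs\gamma$ and $\abs{\alpha-\gamma}$ are bounded away from their extremes, and two boundary parts. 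The boundary parts involve only finitely many orders of one of the factors and are controlled directly by \reff{fm1}--\reff{fn1}, contributing harmless constants; the bulk part reduces to one of the rational multi-index sums of Lemma \ref{lemSum}. Choosing $c_1$ larger than all the accumulated constants then gives the assertion.

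The step I expect to decide the whole estimate is the derivative bookkeeping in the bulk sum. To land on the sum $\sum_\gamma\abs\alpha^2/\bigl(\abs\gamma^3(\abs\alpha-\abs\gamma)^2\bigr)$ of Lemma \ref{lemSum} --- the one bounded \emph{uniformly} in $\alpha$, rather than its companion that grows like $\abs\alpha$ --- one of the two factors must supply a factorial of type $(\abs\gamma-3)!$, i.e. it must decay at the $\mathcal A_2$ (loss-three) rate, whereas the two-factor estimate Lemma \ref{+stab}(\ref{conc1}) only certifies $h_q^2\in\mathcal A_1$ (loss-two). The extra order of decay therefore has to be extracted by keeping the full triple-convolution $h_q\cdot h_q\cdot h_{pp}$ intact and distributing the surplus derivative between the two copies of $h_q$ before summing, as encoded in the three-factor parts of Lemma \ref{+stab}; making this assignment compatible with the multi-index counting is the crux of the argument.
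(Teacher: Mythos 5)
Your setup and your self-diagnosis are both right: to land on the uniformly bounded sum $\sum_{\gamma}\abs\alpha^2/\bigl(\abs\gamma^3(\abs\alpha-\abs\gamma)^2\bigr)$ of Lemma \ref{lemSum}, the factor $\p^\gamma(h_q^2)$ must obey the loss-three bound $c\,L_1^{\gamma_1-2}L_2^{\gamma_2}[(\abs\gamma-3)!]^s$, whereas Lemma \ref{+stab}-\reff{conc1} with $u=v=h_q$ only certifies the loss-two rate. The genuine gap is that the repair you propose at the crux does not exist. The three-factor statements of Lemma \ref{+stab} (the second half of \reff{conc1}, and \reff{conc2}--\reff{conc4}) cannot be applied to $h_q\cdot h_q\cdot h_{pp}$ at order $\alpha$: their conclusions bound the full Leibniz sum $\p^\alpha(uvw)$, which contains the term $h_q^2\,\p^\alpha h_{pp}$ --- precisely the deleted $\gamma=0$ term carrying the unknown $\p_q^{m-n}\p_p^{n+2}h$ of \reff{+hqmn}. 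Certifying $h_{pp}$ as a member of any class $\mathcal{A}_j$ with $\alpha_0=\alpha$ therefore presupposes the very estimate being proved, and you cannot retreat to a smaller $\alpha_0$ either, since the orders $\alpha-\gamma$, $\gamma\neq 0$, are not all dominated by a single multi-index strictly below $\alpha$ (consider $\alpha-(1,0)$ and $\alpha-(0,1)$). Moreover, even where those statements do apply, none of them yields a loss-three conclusion: \reff{conc2} and \reff{conc3} output $\mathcal{A}_0$, while \reff{conc1} and \reff{conc4} output $\mathcal{A}_1$; so the spurious factor $\abs\alpha$ you identified survives and the induction breaks.

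The missing idea, which is the paper's actual device, is elementary but different: exploit that the factor being differentiated is a square and peel one derivative off it \emph{before} invoking the product lemma. For $3\leq\abs\gamma\leq\abs\alpha$ write $\gamma=\tilde\gamma+e$ with $\abs{\tilde\gamma}=\abs\gamma-1$ and $e\in\{(1,0),(0,1)\}$, so that (for $e=(0,1)$, say) $\p^\gamma(h_q^2)=2\p^{\tilde\gamma}(h_qh_{qp})$, the case $e=(1,0)$ being identical with $h_{qq}$ in place of $h_{qp}$. By \reff{fm1} and \reff{fn1} one has $h_q\in\mathcal{A}_2$ and $h_{qp}\in\mathcal{A}_1$ for all orders $\leq\tilde\gamma$, and these orders are all $\leq m-1$, hence inside the induction range. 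The \emph{two-factor} Lemma \ref{+stab}-\reff{conc1}, applied at the reduced order $\tilde\gamma$, then gives $\norm{\p^{\tilde\gamma}(h_qh_{qp})}_0\leq c\,L_1^{\tilde\gamma_1-1}L_2^{\tilde\gamma_2}[(\abs{\tilde\gamma}-2)!]^s$, which, rewritten in terms of $\gamma$ and using $L_2\geq L_1$ when $e=(0,1)$, is exactly the needed loss-three estimate $c\,L_1^{\gamma_1-2}L_2^{\gamma_2}[(\abs\gamma-3)!]^s$ (the paper's \reff{6}). In short, the extra factorial decay is purchased by lowering the differentiation order by one, not by any three-factor estimate. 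With \reff{6} in hand, your splitting into boundary and bulk ranges, inequalities (i)--(ii) and Lemma \ref{lemSum} finish the proof exactly as you outlined.
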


\begin{proof}[Proof of the lemma]
      We firstly use Lemma \ref{+stab} to  treat the term
      $\norm{\p^\gamma h_q^2}_0$ with $3\leq \abs\gamma\leq
      \abs\alpha=m$.   To do so, write
      $\gamma=\tilde\gamma+(\gamma-\tilde\gamma)$ with
      $|\tilde\gamma|=\abs\gamma-1\geq 2$.  Without loss of  generality we may
      take $\gamma-\tilde\gamma=(0,1)$, and the arguments below also
      holds when $\gamma-\tilde\gamma=(1,0)$.   Thus
  \begin{equation}\label{rel}
  \p^\gamma h_q^2=2\p^{\tilde \gamma} \inner{h_q h_{qp}}.
\end{equation}
Note that for any
      $\xi=(\xi_1,\xi_2)\leq \tilde\gamma$ with
      $|\xi|\geq 3$, we have, using the induction assumption
      \reff{fm1},
\[
  \norm{\p^{\xi} h_{qp}}_{0}\leq \norm{\p^{\xi} h}_{2}\leq L_1^{\xi_1-1}L_2^{\xi_2}
[(|\xi|-2)!]^s,
\]
and
    \begin{equation}\label{10}
      \norm{\p^{\xi} h_q}_{0}\leq
\left\{
\begin{array}{lll}
\norm{\p_q^{\xi_1}\p_p^{\xi_2-1} h}_{2} \leq
L_1^{\xi_1-1} L_2^{\xi_2-1}
[(|\xi|-3)!]^s\leq L_1^{\xi_1-2}L_2^{\xi_2}
[(|\xi|-3)!]^s,\quad &\xi_2\geq 1,\\[3pt]
\norm{\p_q^{\xi_1-1}h}_{2} \leq
L_1^{\xi_1-2}
[(|\xi|-3)!]^s= L_1^{\xi_1-2}L_2^{\xi_2}
[(|\xi|-3)!]^s,\quad &\xi_2=0,
\end{array}
    \right.
    \end{equation}
    where in the case $\xi_2\geq 1$ we used $L_2\geq L_1$.
 Therefore applying Lemma \ref{+stab}-\reff{conc1}, with  $H_1=L_1$, $H_2=L_2$,  $u=h_q$ and
 $v=h_{qp}$,  gives
\[
  \norm{\p^{\tilde \gamma} (h_qh_{qp})}_{0}\leq c_5
      L_1^{\tilde \gamma_1-1}L_2^{\tilde \gamma_2} [(|\tilde
      \gamma|-2)!]^s=  c_5
      L_1^{ \gamma_1-1}L_2^{\gamma_2-1} [(|
      \gamma|-3)!]^s\leq  c_5
      L_1^{ \gamma_1-2}L_2^{\gamma_2} [(|
      \gamma|-3)!]^s,
\]
the last inequality holding because $L_2\geq
L_1$. This along with the relation \reff{rel} yields
    \begin{equation}\label{6}
      \forall ~\gamma, ~3\leq \abs{\gamma}\leq
      \abs\alpha,\quad\norm{\p^{\gamma} h^2_q}_{0}\leq 2 c_5
      L_1^{\gamma_1-2}L_2^{\gamma_2} [(|\gamma|-3)!]^s.
    \end{equation}
    On the other hand, for the term
    $\norm{\p^{\alpha-\gamma}h_{pp}}_{0}$, we have,  by the induction
  assumption \reff{fm1},
    \begin{equation}\label{7}
      \forall~\gamma\leq \alpha,~1\leq\abs{\gamma}\leq\abs{\alpha}-2,\quad \norm{\p^{\alpha-\gamma}h_{pp}}_{0}\leq
      \norm{\p^{\alpha-\gamma}h}_{2}\leq
      L_1^{\alpha_1-\gamma_1-1}L_2^{\alpha_2-\gamma_2}[(\abs{\alpha}-\abs{\gamma}-2)!]^s.
    \end{equation}

    Next we write
    \begin{equation*}
     \begin{split}
      \sum_{\gamma\leq \alpha,\gamma\neq 0}{\alpha\choose\gamma}\norm{\p^\gamma h_q^2}_{0}\norm{\p^{\alpha-\gamma}h_{pp}}_{0}&=\inner{\sum_{\stackrel{\gamma\leq \alpha}{1\leq\abs{\gamma}\leq 2}}+\sum_{\stackrel{\gamma\leq \alpha}{3\leq\abs{\gamma}\leq \abs{\alpha}-2}}+\sum_{\stackrel{\gamma\leq \alpha}{\abs{\gamma}\geq \abs{\alpha}-1}}}{\alpha\choose\gamma}\norm{\p^\gamma h_q^2}_{0}\norm{\p^{\alpha-\gamma}h_{pp}}_{0}\\
      &=J_1+J_2+J_3.
      \end{split}
    \end{equation*}
    By virtue of \reff{6} and \reff{7},  direct computation as in
    \reff{1},  shows that
    \begin{equation*}
      J_1+J_3\leq c_6L_1^{\alpha_1-2}L_2^{\alpha_2}[(\abs{\alpha}-2)!]^s.
    \end{equation*}
 Next for $J_2$, which appears only when $\abs{\alpha}\geq 5$, we have by \reff{6} and \reff{7} that
    \begin{equation*}
      \begin{split}
        J_2&\leq 2c_5\sum_{\stackrel{\gamma\leq
            \alpha}{3\leq\abs{\gamma}\leq
            \abs{\alpha}-2}}\frac{\abs{\alpha}!}{\abs{\gamma}!(\abs{\alpha}-\abs{\gamma})!}
             L_1^{\gamma_1-2}L_2^{\gamma_2} [(|\gamma|-3)!]^sL_1^{\alpha_1-\gamma_1-1}L_2^{\alpha_2-\gamma_2}[(\abs{\alpha}-\abs{\gamma}-2)!]^s\\
      &\leq c_7\sum_{\stackrel{\gamma\leq \alpha}{3\leq\abs{\gamma}\leq \abs{\alpha}-2}}\frac{\abs{\alpha}!}{\abs{\gamma}^3(\abs{\alpha}-\abs{\gamma})^2}
     L_1^{\alpha_1-3}L_2^{\alpha_2} [(|\gamma|-3)!]^{s-1}[(\abs{\alpha}-\abs{\gamma}-2)!]^{s-1}\\
     &\leq  c_7L_1^{\alpha_1-3}L_2^{\alpha_2}\sum_{\stackrel{\gamma\leq \alpha}{3\leq\abs{\gamma}\leq \abs{\alpha}-2}}\frac{\abs{\alpha}!}{\abs{\gamma}^3(\abs{\alpha}-\abs{\gamma})^2}
      [(\abs{\alpha}-5)!]^{s-1}\\
      &\leq c_7L_1^{\alpha_1-3}L_2^{\alpha_2} [(\abs{\alpha}-2)!]^{s}\sum_{\stackrel{\gamma\leq \alpha}{3\leq\abs{\gamma}\leq \abs{\alpha}-2}}\frac{\abs{\alpha}^2}{\abs{\gamma}^3(\abs{\alpha}-\abs{\gamma})^2}\\
      &\leq c_8L_1^{\alpha_1-3}L_2^{\alpha_2} [(\abs{\alpha}-2)!]^{s},
      \end{split}
    \end{equation*}
    the last inequality using Lemma \ref{lemSum}. Therefore, choosing $c_1=c_6+c_8$,   we can
    combine the estimates for $J_1, J_2$ and $ J_3$ to complete the proof of the lemma.
    \end{proof}

 \begin{lemma}
  For $\alpha=(\alpha_1,\alpha_2)=(m-n,n)$ with
     $1\leq n\leq m$, we have
      \begin{equation*}
        2\norm{\p^{\alpha}(h_p h_q
          h_{qp})}_{0}+\norm{\p^{\alpha}(h_p^2h_{qq})}_{0}\leq c_2 L_1^{\alpha_1}L_2^{\alpha_2-1} [(\abs{\alpha}-2)!]^{s}.
      \end{equation*}
    \end{lemma}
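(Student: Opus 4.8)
The plan is to bound the two products $\p^\alpha(h_p h_q h_{qp})$ and $\p^\alpha(h_p^2 h_{qq})$ separately, each by $c\,L_1^{\alpha_1}L_2^{\alpha_2-1}[(\abs\alpha-2)!]^s$, via a direct Leibniz expansion in the spirit of the proof of Lemma \ref{lem401}. The decisive point, which explains why the target carries the asymmetric exponents $L_1^{\alpha_1}L_2^{\alpha_2-1}$ rather than the $\mathcal A_1$-natural $L_1^{\alpha_1-1}L_2^{\alpha_2}$ (note $L_1^{\alpha_1}L_2^{\alpha_2-1}\le L_1^{\alpha_1-1}L_2^{\alpha_2}$ since $L_2\ge L_1$, so the plain $\mathcal A_1$ output of Lemma \ref{+stab} would be too weak), is that the factors carrying a net $p$-derivative must be estimated by trading one factor $\p_p^2$ (or $\p_q\p_p$) into the H\"older norm $\norm{\cdot}_2$ and then invoking the \emph{inner} induction hypothesis \reff{fn1} on the number of $p$-derivatives.

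First I would record per-factor bounds. From \reff{fm1}, together with the elementary absorptions $\norm{\p^\xi h_{qp}}_0\le\norm{\p_q^{\xi_1+1}\p_p^{\xi_2-1}h}_2$ and $\norm{\p^\xi h_{qq}}_0\le\norm{\p_q^{\xi_1+1}\p_p^{\xi_2-1}h}_2$ (valid for $\xi_2\ge1$, trading $\p_p^2$ respectively $\p_q\p_p$ into $\norm{\cdot}_2$), I obtain for indices of order $\le\abs\alpha-1$ the $p$-shifted estimates $\norm{\p^\xi h_{qp}}_0,\ \norm{\p^\xi h_{qq}}_0\le L_1^{\xi_1}L_2^{\xi_2-1}[(\abs\xi-2)!]^s$; meanwhile $h_q\in\mathcal A_2$ is exactly \reff{10}, $h_p\in\mathcal A_2$ follows by the same argument, and Lemma \ref{+stab}-\reff{conc1} yields $\mathcal A_1$-type bounds for $\norm{\p^\xi(h_p^2)}_0$ and $\norm{\p^\xi(h_p h_q)}_0$. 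For the top-order contributions, where $\p^\alpha$ lands entirely on the $p$-heavy factor, the absorbed index $\p_q^{m-n+1}\p_p^{n-1}h$ has total order $m$ but $p$-order $n-1$, which is beyond the reach of \reff{fm1}; here I would apply \reff{fn1} with $i=n-1$ to get precisely $L_1^{m-n}L_2^{n-1}[(m-2)!]^s=L_1^{\alpha_1}L_2^{\alpha_2-1}[(\abs\alpha-2)!]^s$. This is where the target bound originates, and the hypothesis $n\ge1$ is exactly what guarantees a $\p_p$ is available to absorb.

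With these ingredients I would Leibniz-expand each product and split the convolution sum $\sum_{\gamma\le\alpha}\binom\alpha\gamma(\cdots)$ into the three ranges $\abs\gamma\le2$, a middle range $3\le\abs\gamma\le\abs\alpha-2$, and $\abs\gamma\ge\abs\alpha-1$. The two outer ranges contain only finitely many terms in which a bounded number of derivatives hit one factor, so they reduce to harmless constants times the induction bounds \reff{fm1}, \reff{fn1}, exactly as in the derivation of \reff{1}; the $\abs\gamma\ge\abs\alpha-1$ block is where \reff{fn1} delivers the target scaling. The middle range I would control by inserting the per-factor estimates, using $\binom\alpha\gamma\le\binom{\abs\alpha}{\abs\gamma}$, the submultiplicativity $(k!)^{s-1}(\ell!)^{s-1}\le[(k+\ell)!]^{s-1}$, and Lemma \ref{lemSum} to sum expressions such as $\sum\abs\alpha^2/(\abs\gamma^3(\abs\alpha-\abs\gamma)^2)$ to a constant; the resulting bound retains a spare factor (a power of $L_1^{-1}$, or $(m-2)^{-s}$ from the factorial reductions) that keeps it within budget.

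The hard part will be the bookkeeping of exponents: in every subcase one must verify that the product of two per-factor estimates collapses to the common shape $L_1^{\alpha_1}L_2^{\alpha_2-1}[(\abs\alpha-2)!]^s$, repeatedly using $L_2\ge L_1\ge1$ to convert a surplus of $L_2$'s into $L_1$'s in the favorable direction, while checking that the various factorial reductions (indices $\abs\xi-2$, $\abs\xi-3$, etc.) recombine through the submultiplicativity inequality and Lemma \ref{lemSum} into at most $[(\abs\alpha-2)!]^s$. Finally I would add the two product estimates, absorb all harmless constants into a single $c_2$, and conclude. I note that the identity $2h_p h_q h_{qp}+h_p^2 h_{qq}=\p_q\inner{h_q h_p^2}$ provides a useful consistency check on the grouping of the two terms, though the upper bound itself is obtained by estimating each term on its own.
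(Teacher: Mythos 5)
Your central idea is the right one, and it is in fact the same mechanism the paper uses: the hypothesis $n\ge 1$ must be spent by trading a $\p_p$ into the $C^{2}$-norm so that the inner induction hypothesis \reff{fn1} becomes available, and this is what produces the asymmetric exponents $L_1^{\alpha_1}L_2^{\alpha_2-1}$. The paper implements this by writing $\p^\alpha=\p^{\tilde\alpha}\p_p$ with $\tilde\alpha=(m-n,n-1)$, expanding $\p_p\inner{h_ph_qh_{qp}}=h_{pp}h_qh_{qp}+h_ph_{qp}h_{qp}+h_ph_qh_{qpp}$, and applying Lemma \ref{+stab}-\reff{conc1}, \reff{conc2}, \reff{conc3} at the reduced index $\tilde\alpha$: since $\tilde\alpha_2=n-1$, the power of $L_2$ is automatically capped at $\alpha_2-1$, and every per-factor derivative stays within reach of \reff{fm1} and \reff{fn1}. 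Your treatment of the top-order term $h_ph_q\,\p^\alpha h_{qp}$ via \reff{fn1} with $i=n-1$ coincides exactly with what drives that argument.

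The gap is in your bookkeeping scheme, i.e.\ the grouping $\inner{h_ph_q}\cdot h_{qp}$ and $\inner{h_p^2}\cdot h_{qq}$ with the grouped factor bounded by Lemma \ref{+stab}-\reff{conc1}. First, the middle range does not close: the \reff{conc1} bound and your $p$-shifted bound both carry the factorial reduction $[(\cdot-2)!]^s$, so the middle-range convolution is governed by $\sum\abs\alpha^2/\big(\abs\gamma^2(\abs\alpha-\abs\gamma)^2\big)$, which Lemma \ref{lemSum} controls only by $8\pi^2\abs\alpha$ and which is genuinely unbounded in $\abs\alpha$ (it grows at least like $\log\abs\alpha$ when $\alpha_1\approx\alpha_2$); the sum $\sum\abs\alpha^2/\big(\abs\gamma^3(\abs\alpha-\abs\gamma)^2\big)$ you quote presupposes a $[(\cdot-3)!]^s$ ($\mathcal A_2$-type) reduction on one factor, which neither of your two bounds supplies. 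The spare factors you invoke cannot absorb this loss: $L_1^{-1}$ is a fixed constant, and the factorial surplus $\com{(\abs\alpha-2)(\abs\alpha-3)}^{-(s-1)}$ equals $1$ precisely in the analytic case $s=1$. Second, in the terms where all $p$-derivatives land on the grouped factor, no $p$-shift is available on $h_{qp}$ (resp.\ $h_{qq}$), and the \reff{conc1} bound then carries $L_2^{\gamma_2}=L_2^{\alpha_2}$, one power of $L_2$ more than the target permits; note that replacing $L_2$ by $L_1$ is the invalid direction of the inequality (only $L_1\le L_2$ may be used to enlarge a bound), so your remark about converting a surplus of $L_2$'s into $L_1$'s cannot be carried out. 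Both defects are repairable, but not for free: either prove the stronger $p$-shifted $\mathcal A_2$-type product bounds $\norm{\p^\gamma(h_ph_q)}_0,\ \norm{\p^\gamma(h_p^2)}_0\le c\,L_1^{\gamma_1-1}L_2^{\gamma_2-1}[(\abs\gamma-3)!]^s$ for $\gamma_2\ge1$ (true, but a new lemma beyond \reff{conc1}), or expand the full three-factor product so that $h_p$ and $h_q$ retain their $[(\cdot-3)!]^s$ reductions with the $p$-shift placed on whichever factor carries a $p$-derivative, or simply peel one $\p_p$ off the whole product first, as the paper does, which makes both problems disappear simultaneously.
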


    \begin{proof}[Proof of the lemma]
    Since $n\geq 1$, we can write $\alpha=\tilde\alpha+(0,1)$ with $\tilde \alpha=(\tilde\alpha_1,\tilde\alpha_2)=(m-n,n-1)$.
    Thus
    \begin{equation}\label{d}
      \p^{\alpha}(h_p h_q h_{qp})=\p^{\tilde\alpha}\inner{h_{pp} h_q h_{qp}+h_p h_{qp} h_{qp}+h_p h_q h_{qpp}}.
    \end{equation}
    We next compute the estimate for the term $\p^{\tilde\alpha}(h_{pp} h_q h_{qp})$.
      For any $\gamma\leq \tilde\alpha$ with $\abs{\gamma}\geq 3$,  we
      have, as for $\norm{\p^{\xi} h_q}_{0}$ in \reff{10},
      \begin{equation*}
        \norm{\p^\gamma h_q}_0\leq L_1^{\gamma_1-2}L_2^{\gamma_2}[(\abs{\gamma}-3)!]^s,\qquad \norm{\p^\gamma h_p}_0\leq L_1^{\gamma_1-2}L_2^{\gamma_2}[(\abs{\gamma}-3)!]^s,
      \end{equation*}
      and by  the induction assumption  \reff{fm1} and \reff{fn1}, in view of
      $\gamma_2\leq \tilde\alpha_2= n-1$,
\begin{eqnarray*}
\norm{\p^\gamma h_{pp}}_0&\leq&\norm{\p^\gamma h}_2\leq
L_1^{\gamma_1-1}L_2^{\gamma_2}[(\abs{\gamma}-2)!]^s,\\
\norm{\p^\gamma h_{qp}}_0&\leq &\norm{\p^\gamma h}_2\leq
L_1^{\gamma_1-1}L_2^{\gamma_2}[(\abs{\gamma}-2)!]^s,\\
\norm{\p^\gamma h_{qpp}}_0&\leq &\norm{\p_q^{\gamma_1+1}\p_p^{\gamma_2}
  h}_2 \leq L_1^{\gamma_1}L_2^{\gamma_2}[(\abs{\gamma}-1)!]^s.
\end{eqnarray*}
Thus   we obtain,
using Lemma
  \ref{+stab}-\reff{conc1} with $u=h_{p}$, $w=h_{p}$ and $v=h_{qp}$,
\begin{eqnarray*}\label{a}
        \norm{\p^{\tilde\alpha}(h_{p} h_q h_{qp})}_0\leq c_9 L_1^{\tilde\alpha_1-1}L_2^{\tilde\alpha_2}[(\abs{\tilde\alpha}-2)!]^s=c_9 L_1^{\alpha_1-1}L_2^{\alpha_2-1}[(\abs{\alpha}-3)!]^s
        \leq c_9 L_1^{\alpha_1}L_2^{\alpha_2-1}[(\abs{\alpha}-2)!]^s.
      \end{eqnarray*}
Similarly,  using Lemma
  \ref{+stab}-\reff{conc2} with $u=h_{p}$, $v=w=h_{qp}$, gives
 \begin{eqnarray*}\label{a}
        \norm{\p^{\tilde\alpha}(h_{p} h_{qp} h_{qp})}_0\leq c_{10} L_1^{\tilde\alpha_1}L_2^{\tilde\alpha_2}[(\abs{\tilde\alpha}-1)!]^s
        =c_{10} L_1^{\alpha_1}L_2^{\alpha_2-1}[(\abs{\alpha}-2)!]^s,
      \end{eqnarray*}
while  using Lemma
  \ref{+stab}-\reff{conc3} with $u=h_{p}$, $v=h_{q}$ and $w=h_{qpp}$ gives
 \begin{eqnarray*}\label{a}
       \norm{\p^{\tilde\alpha}(h_{p} h_{p} h_{qpp})}_0\leq c_{11} L_1^{\tilde\alpha_1}L_2^{\tilde\alpha_2}[(\abs{\tilde\alpha}-1)!]^s
        =c_{11} L_1^{\alpha_1}L_2^{\alpha_2-1}[(\abs{\alpha}-2)!]^s.
      \end{eqnarray*}
Combining the above inequalities, we have,  in view of \reff{d},
      \begin{eqnarray*}
       2 \norm{\p^{\alpha}(h_p h_q h_{qp})}_0\leq 2(c_9+c_{10}+c_{11}) L_1^{\alpha_1}L_2^{\alpha_2-1}[(\abs{\alpha}-2)!]^s.
      \end{eqnarray*}
   The treatment for the term $\norm{\p^{\alpha}(h_p^2h_{qq})}_{0}$ is
   completely the same as above, so we have
      \begin{eqnarray*}
        \norm{\p^{\alpha}(h_p^2h_{qq})}_{0}\leq c_{12} L_1^{\alpha_1}L_2^{\alpha_2-1}[(\abs{\alpha}-2)!]^s.
      \end{eqnarray*}
      Combining the above two estimates, we choose
      $c_2=2(c_9+c_{10}+c_{11})+c_{12}$ to complete the proof of the lemma.
    \end{proof}

   \begin{lemma}\label{lem403}
     Let $\beta\in G^s([p_0,0])$ and $L_2\geq L_1\tilde M$ with $\tilde M$ a constant depending on $M$ given in \reff{GevCon} and $s$. We have, for $\alpha=(\alpha_1,\alpha_2)=(m-n,n)$ with
     $1\leq n\leq m$,
 \begin{equation*}
        \norm{\p^{\alpha}\inner{\beta h_p^3}}_{0}\leq c_3 L_1^{\alpha_1-2}L_2^{\alpha_2} [(\abs{\alpha}-2)!]^{s}.
      \end{equation*}
    \end{lemma}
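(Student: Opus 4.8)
The plan is to expand $\p^{\alpha}\inner{\beta h_p^3}$ by the Leibniz formula and exploit that $\beta=\beta(p)$ depends on $p$ alone, so that every $q$-derivative falls on $h_p^3$ and only the $p$-derivatives are distributed between the two factors. With $\alpha=(\alpha_1,\alpha_2)=(m-n,n)$ this collapses the full multi-index sum into the single sum
\[
\p^{\alpha}\inner{\beta h_p^3}=\sum_{0\leq j\leq n}\binom{n}{j}\inner{\p_p^{j}\beta}\inner{\p_q^{m-n}\p_p^{n-j}h_p^3},
\]
whence $\norm{\p^{\alpha}\inner{\beta h_p^3}}_0\leq\sum_{0\leq j\leq n}\binom{n}{j}\norm{\p_p^{j}\beta}_0\,\norm{\p^{\delta}h_p^3}_0$ with $\delta=(m-n,n-j)$. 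The first factor is handled directly by the Gevrey hypothesis \reff{GevCon}, namely $\norm{\p_p^{j}\beta}_0\leq M^{j+1}(j!)^s$, so it remains to estimate $\norm{\p^{\delta}h_p^3}_0$.

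For that I would first record that $h_p\in\mathcal A_2$ in the sense of Lemma \ref{+stab}, relative to the fixed multi-index $\alpha_0=\alpha$, i.e. $\norm{\p^{\gamma}h_p}_0\leq L_1^{\gamma_1-2}L_2^{\gamma_2}[(\abs\gamma-3)!]^s$ for all $\gamma\leq\alpha$ with $\abs\gamma\geq3$. This follows from the induction hypotheses \reff{fm1} and \reff{fn1} exactly as in the derivation of \reff{10}; the only delicate case is the top order $\abs\gamma=\abs\alpha=m$, which forces $\gamma=\alpha$, where one writes $\norm{\p^{\alpha}h_p}_0\leq\norm{\p_q^{m-n}\p_p^{n-1}h}_2$ and invokes $(F_{m-1})$ to produce the factorial $[(m-3)!]^s$, the power mismatch being absorbed by $L_2\geq L_1$. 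A single application of Lemma \ref{+stab}-\reff{conc4} with $u=v=w=h_p$ then yields, for every $\delta\leq\alpha$ with $\abs\delta\geq2$,
\[
\norm{\p^{\delta}h_p^3}_0\leq c_* L_1^{\delta_1-2}L_2^{\delta_2}[(\abs\delta-2)!]^s=c_*L_1^{m-n-2}L_2^{n-j}[(m-j-2)!]^s .
\]

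Inserting both bounds and factoring out $c_*L_1^{m-n-2}L_2^{n}$, the claim reduces to the uniform estimate $\sum_{0\leq j\leq m-2}\binom{n}{j}M^{j+1}L_2^{-j}(j!)^s[(m-j-2)!]^s\leq C[(m-2)!]^s$ with $C$ independent of $m,n$. The key algebraic identity is $(j!)(m-j-2)!=(m-2)!/\binom{m-2}{j}$, giving $(j!)^s[(m-j-2)!]^s=[(m-2)!]^s/[\binom{m-2}{j}]^s$; since $s\geq1$ and $\binom{n}{j}\leq\binom{m}{j}$, the sum is dominated by $[(m-2)!]^s\sum_{j}\frac{\binom{m}{j}}{\binom{m-2}{j}}M^{j+1}L_2^{-j}$. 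Choosing $L_2\geq L_1\tilde M$ with $\tilde M$ a large enough multiple of $M$ (depending on $s$) makes $M/L_2$ small, and splitting according to $j\leq m/2$ and $j>m/2$ shows the remaining sum is bounded uniformly in $m$: for small $j$ the ratio $\binom{m}{j}/\binom{m-2}{j}=\frac{m(m-1)}{(m-j)(m-j-1)}$ stays bounded, while for large $j$ the geometric factor $L_2^{-j}$ overwhelms its at-most-quadratic growth.

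The main obstacle is precisely this uniform boundedness of the combinatorial sum: one must not discard the cancelling factor $1/\binom{m-2}{j}$, since the crude bound $\sum_{j}\binom{n}{j}(M/L_2)^j\sim(1+M/L_2)^n$ is \emph{not} uniform in $n\leq m$. The remaining loose ends are routine. The finitely many boundary terms $j\in\{m-1,m\}$ occur only when $n\in\{m-1,m\}$ and lie outside the range of Lemma \ref{+stab}-\reff{conc4}; they involve only low-order derivatives of $h_p^3$, hence are controlled by harmless constants, and their Gevrey growth $M^{j+1}(j!)^s$ is again dominated by $L_2^{m}[(m-2)!]^s$ once $L_2\geq L_1\tilde M$, the spare power of $L_1$ absorbing the $L_1^{-2}$ present in the target. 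Collecting the main sum and the boundary terms and choosing $c_3$ accordingly completes the proof.
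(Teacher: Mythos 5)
Your proof is correct, and it follows the paper's skeleton in its main structural steps: reduce via Leibniz, establish $h_p\in\mathcal A_2$ from the induction hypotheses \reff{fm1}--\reff{fn1} (including the top-order case via $L_2\geq L_1$), apply Lemma \ref{+stab}-\reff{conc4} with $u=v=w=h_p$ to control $\p^\delta h_p^3$, and use the constraint $L_2\geq L_1\tilde M$ to tame the Gevrey growth of $\beta$. Where you genuinely depart from the paper is the final combinatorial step. The paper first converts the Gevrey bound on $\beta$ into the same class format as the $h$-estimates, namely $\norm{\p^\gamma\beta}_0\leq L_1^{\gamma_1-2}L_2^{\gamma_2}[(\abs\gamma-3)!]^s$ (its \reff{gambeta3}, absorbing $M^{\gamma_2+1}(\gamma_2!)^s$ into $\tilde M^{\gamma_2}[(\gamma_2-3)!]^s$), and then reruns verbatim the $J_1$--$J_3$ splitting of Lemma \ref{lem401}, whose middle range is controlled by the factorial interpolation $(a!)^{s-1}(b!)^{s-1}\leq[(a+b)!]^{s-1}$ together with the fixed sums of Lemma \ref{lemSum}. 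You instead keep the raw bound $M^{j+1}(j!)^s$, exploit that $\beta=\beta(p)$ collapses the Leibniz sum to a single index, and close with the exact identity $(j!)\,(m-j-2)!=(m-2)!/{m-2\choose j}$ plus geometric decay of $(M/L_2)^j$ against the polynomially growing ratio ${m\choose j}/{m-2\choose j}=\frac{m(m-1)}{(m-j)(m-j-1)}$; you also correctly flag that the crude bound $(1+M/L_2)^n$ would not be uniform in $n\leq m$, which is exactly the trap this lemma sets. Your route is more self-contained for this particular lemma (it bypasses Lemma \ref{lemSum} and the $s$-interpolation inequality entirely in the main sum, with $s$ entering only through $\tilde M$ and the two boundary terms $j\in\{m-1,m\}$, which you dispose of correctly); the paper's route buys uniformity of machinery, since the same $J_1$--$J_3$ argument is reused across Lemmas \ref{lem401}--\ref{lem404} and in Lemma \ref{+stab} itself.
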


 \begin{proof}[Proof of the lemma]
As for $\norm{\p^{\xi} h_q}_{0}$ in \reff{10},   we have by induction
      \begin{equation*}
        \forall~  \gamma\leq \alpha,~\abs{\gamma}\geq 3,\quad \norm{\p^{\gamma} h_p}_0\leq L_1^{\gamma_1-2}L_2^{\gamma_2}\com{(|\gamma|-3)!}^s.
      \end{equation*}
Thus using Lemma \ref{+stab}-\reff{conc4} with $H_i=L_i, i=1,2$,
$u=v=w=h_p$,  we deduce that $c_{13}^{-1}L_1 h_p^3\in\mathcal A_1$, that is,
      \begin{equation}\label{betahp3}
        \forall~\gamma\leq \alpha,~\abs{\gamma}\geq 2,\quad
        \norm{\p^\gamma \inner{ h_p^3}}_0 \leq c_{13} L_1^{\gamma_1-2}L_2^{\gamma_2}[(\abs{\gamma}-2)!]^s.
      \end{equation}
On the other hand, since  $\beta(p)\in G^s([p_0, 0])$,  then
using  \reff{GevCon}
 gives
\begin{eqnarray*}
   \forall~\abs{\gamma}\geq 3,\quad \ \norm{\p^\gamma \beta}_0\leq
\left\{
\begin{array}{lll}
0,&\quad  \gamma_1\geq 1,\\
 \norm{\p_p^{\gamma_2} \beta}_0 \leq M^{\gamma_2+1}(\gamma_2!)^s \leq
 \tilde M^{\gamma_2}[(\gamma_2-3)!]^s,
 &\quad \gamma_1=0,
\end{array}
\right.
\end{eqnarray*}
where $\tilde M$ in the last inequality is a constant
depending only on  $M$ and $s$.  Thus if we choose $L_1, L_2$ in such a
way that
\begin{equation}\label{l1l24}
L_2\geq L_1 \tilde M,
\end{equation}
then we have
 \begin{equation}\label{gambeta3}
 \forall~\gamma\leq \alpha,~\abs{\gamma}\geq 3,\quad\norm{\p^\gamma \beta}_0\leq  L_1^{\gamma_1-2}L_2^{\gamma_2}[(\abs{\gamma}-3)!]^s.
\end{equation}
Now we write
\begin{eqnarray*}
 \norm{\p^{ \alpha}( \beta h_p^3)}_0\leq \sum_{\gamma\leq
   \alpha}\frac{\abs\alpha !}{\abs\gamma!(\abs\alpha-\abs\gamma)!}\norm{\p^{ \gamma}\beta }_0 \norm{\p^{ \alpha-\gamma}(  h_p^3)}_0.
\end{eqnarray*}
This together with \reff{betahp3} and \reff{gambeta3} allows us to  argue as the
treatment of $J_1-J_3$ in Lemma \ref{lem401},  to conclude
\begin{eqnarray*}
 \norm{\p^{ \alpha}( \beta h_p^3)}_0\leq c_{14}
 L_1^{\alpha_1-2}L_2^{\alpha_2}[(|\alpha|-2)!]^s.
\end{eqnarray*}
Thus the desired estimate follows  if choosing $c_3=c_{14}$.  The proof
is thus complete.
\end{proof}

\begin{lemma}\label{lem404}
     Let $\rho\in G^s([p_0,0])$. We have, for $\alpha=(\alpha_1,\alpha_2)=(m-n,n)$ with
     $1\leq n\leq m$,
 \begin{equation*}
        g\norm{\p^{\alpha}\inner{(h-d)\rho_p h_p^3}}_{0}\leq c_4 L_1^{\alpha_1-2}L_2^{\alpha_2} [(\abs{\alpha}-2)!]^{s}.
      \end{equation*}
    \end{lemma}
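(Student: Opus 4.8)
The plan is to follow the proof of Lemma \ref{lem403} almost verbatim, the point being that $\rho_p$, like $\beta$, is a function of $p$ alone. Writing the quantity to be estimated as $(h-d)\cdot\inner{\rho_p h_p^3}$, I would first treat the factor $\rho_p h_p^3$ exactly as $\beta h_p^3$ was treated, and then bring in the extra factor $h-d$ by one further Leibniz expansion. The one genuinely new feature is that the top-order derivative may now fall on the \emph{unknown} $h-d$ instead of on a prescribed Gevrey function, and that term has to be controlled through the induction hypotheses rather than through \reff{GevCon}--\reff{Gev rho}.

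First I would record the Gevrey bound for $\rho_p=\p_p\rho$. Since $\rho\in G^s([p_0,0])$, differentiating \reff{Gev rho} gives $\sup_{[p_0,0]}\abs{\p_p^k\rho_p}\leq N^{k+2}\inner{(k+1)!}^s$, which is of exactly the same type as the bound \reff{GevCon} used for $\beta$; as $\rho_p$ depends only on $p$ we have $\p^\gamma\rho_p=0$ whenever $\gamma_1\geq1$, and absorbing the polynomial factor $\bigl((\abs\gamma+1)!/(\abs\gamma-3)!\bigr)^s$ into a geometric one allows one to choose $L_2\geq L_1\tilde N$, with $\tilde N$ depending only on $N$ and $s$, so that
\[
\forall~\gamma\leq\alpha,~\abs\gamma\geq3,\qquad \norm{\p^\gamma\rho_p}_0\leq L_1^{\gamma_1-2}L_2^{\gamma_2}\com{(\abs\gamma-3)!}^s,
\]
the exact analogue of \reff{gambeta3}, while $\norm{\p^\gamma\rho_p}_0$ is a harmless constant for $\abs\gamma\leq2$. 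With this and \reff{betahp3} at hand, the computation proving Lemma \ref{lem403} applies verbatim to the pair $(\rho_p,h_p^3)$ and yields, for every $\delta\leq\alpha$ with $2\leq\abs\delta\leq m$,
\[
\norm{\p^\delta\inner{\rho_p h_p^3}}_0\leq c\, L_1^{\delta_1-2}L_2^{\delta_2}\com{(\abs\delta-2)!}^s .
\]
The essential gain here is that, because $\rho_p$ carries no $q$-derivatives, the product $\rho_p h_p^3$ inherits the favourable $L_1^{\delta_1-2}$ scaling of $h_p^3$ rather than the weaker $L_1^{\delta_1-1}$ scaling of a generic factor.

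Next I would expand
\[
\p^\alpha\bigl((h-d)\,\rho_p h_p^3\bigr)=\sum_{\gamma\leq\alpha}{\alpha\choose\gamma}\p^\gamma(h-d)\,\p^{\alpha-\gamma}\bigl(\rho_p h_p^3\bigr)
\]
and split the sum according to the size of $\gamma$, as in the treatment of $J_1$--$J_3$ in Lemma \ref{lem401}. For $\gamma=0$ the factor $\rho_p h_p^3$ already supplies the exponent $L_1^{\alpha_1-2}$; for the bulk indices $2\leq\abs\gamma$, $2\leq\abs{\alpha-\gamma}$ I would combine $\norm{\p^\gamma(h-d)}_0=\norm{\p^\gamma h}_0\leq L_1^{\gamma_1-1}L_2^{\gamma_2}\com{(\abs\gamma-2)!}^s$, which is \reff{fm1} (valid since then $\abs\gamma\leq m-2$), with the bound for $\rho_p h_p^3$: the two $L_1$-exponents add to $(\gamma_1-1)+(\alpha_1-\gamma_1-2)=\alpha_1-3\leq\alpha_1-2$, and the factorials and binomial coefficients are summed by Lemma \ref{lemSum}. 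The indices with $\abs\gamma\leq1$ leave only low, hence bounded, derivatives of $h-d$ and are controlled by the second step.

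The main obstacle is the single term $\gamma=\alpha$, together with the neighbouring terms in which $h-d$ carries nearly all the derivatives. For $\gamma=\alpha$ the quantity $\p^\alpha(h-d)=\p_q^{m-n}\p_p^n h$ is of the top order $m$, so \reff{fm1} is unavailable and the naive use of $\norm{\p^\gamma(h-d)}_0$ would only yield the insufficient exponent $L_1^{\alpha_1-1}$. Here I would invoke the inner induction on $n$: as $n\geq1$, the function $\p_q^{m-n}\p_p^n h$ is a first $p$-derivative of $\p_q^{m-n}\p_p^{n-1}h$, so that
\[
\norm{\p^\alpha(h-d)}_0=\norm{\p_q^{m-n}\p_p^n h}_0\leq\norm{\p_q^{m-n}\p_p^{n-1}h}_2\leq L_1^{m-n-1}L_2^{n-1}\com{(m-3)!}^s,
\]
the last inequality being \reff{fm1} applied at order $m-1$. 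Against the target $L_1^{m-n-2}L_2^{n}\com{(m-2)!}^s$ the surplus power of $L_1$ is absorbed by the gain $L_2^{-1}\com{(m-3)!/(m-2)!}^s=L_2^{-1}(m-2)^{-s}\leq\frac18$, using $L_2\geq8L_1$ from \reff{l1l23}; the same mechanism disposes of the near-boundary terms. Collecting all the ranges of $\gamma$ and absorbing $g$ into the constant then gives $g\norm{\p^\alpha((h-d)\rho_p h_p^3)}_0\leq c_4 L_1^{\alpha_1-2}L_2^{\alpha_2}\com{(\abs\alpha-2)!}^s$, which is the claim.
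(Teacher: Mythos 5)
Your grouping $(h-d)\cdot\inner{\rho_p h_p^3}$ is a reasonable alternative to the paper's grouping $\inner{(h-d)\rho_p}\cdot h_p^3$: your Gevrey bound for $\rho_p$ is exactly the paper's \reff{gam rhop}, your bound for $\rho_p h_p^3$ does follow as in Lemma \ref{lem403}, and your treatment of the terms with $\abs\gamma\geq\abs\alpha-1$ (inner induction in $n$ plus $L_2\geq 8L_1$) is correct. The genuine gap is in the bulk range $2\leq\abs\gamma\leq\abs\alpha-2$. There you pair $\norm{\p^\gamma(h-d)}_0\leq\norm{\p^\gamma h}_{2}\leq L_1^{\gamma_1-1}L_2^{\gamma_2}\com{(\abs\gamma-2)!}^s$ from \reff{fm1} with $\norm{\p^{\alpha-\gamma}(\rho_ph_p^3)}_0\leq c\,L_1^{\alpha_1-\gamma_1-2}L_2^{\alpha_2-\gamma_2}\com{(\abs\alpha-\abs\gamma-2)!}^s$, so that \emph{both} factors carry the factorial weight $(\,\cdot\,-2)!$. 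After the standard reductions (${\alpha\choose\gamma}\leq{\abs\alpha\choose\abs\gamma}$, $\abs\alpha!\leq\abs\alpha^2(\abs\alpha-2)!$, and $(m!)^{s-1}(n!)^{s-1}\leq\com{(m+n)!}^{s-1}$), the bulk contribution is controlled only by
\begin{equation*}
c\,L_1^{\alpha_1-3}L_2^{\alpha_2}\com{(\abs\alpha-2)!}^s\sum_{\stackrel{\gamma\leq\alpha}{2\leq\abs\gamma\leq\abs\alpha-2}}\frac{\abs\alpha^2}{\abs\gamma^2(\abs\alpha-\abs\gamma)^2},
\end{equation*}
and for this exponent pattern (squares in both factors) Lemma \ref{lemSum} gives only the bound $8\pi^2\abs\alpha$, not a constant. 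This is not an artifact of the lemma: when $\alpha_1=\alpha_2$ the number of $\gamma\leq\alpha$ with $\abs\gamma=k$ is of order $\min(k,\abs\alpha-k)$, and the sum above is at least of order $\log\abs\alpha$. The spare factor $L_1^{-1}$ you point to is a fixed constant and cannot absorb a quantity growing with $m$; your $c_4$ would then depend on $m$, and the closing choice \reff{Lgeq}, $L_1\geq4(c_1+c_3+c_4)$, which must be made once and uniformly in $m$, becomes impossible.

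The missing idea is the order-reduction device the paper records as \reff{99}: for $\abs\gamma\geq4$ one does not bound $\norm{\p^\gamma h}_0$ by $\norm{\p^\gamma h}_2$, but writes $\p^\gamma h$ as a second-order derivative of some $\p^{\gamma'}h$ with $\abs{\gamma'}=\abs\gamma-2$ and applies \reff{fm1} at that lower order; together with $L_2\geq L_1$ this yields $\norm{\p^\gamma(h-d)}_0\leq L_1^{\gamma_1-3}L_2^{\gamma_2}\com{(\abs\gamma-4)!}^s$, a gain of \emph{two} in the factorial weight, not merely one power of $L_1$. With this replacement in the bulk (the cases $\abs\gamma\in\{2,3\}$ being finitely many harmless terms), the convolution is controlled by $\sum\abs\alpha^2/\bigl(\abs\gamma^3(\abs\alpha-\abs\gamma)^2\bigr)\leq8\pi^2$ via the first inequality of Lemma \ref{lemSum}, and your scheme closes; this also makes your separate treatment of $\gamma=\alpha$ unnecessary, since \reff{99} covers the top order as well. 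The paper packages the same gain differently: it places $h-d$ in $\mathcal A_3$ via \reff{99}, multiplies by $\rho_p\in\mathcal A_2$ using clause \reff{conc5} of Lemma \ref{+stab} to obtain \reff{98}, and only then pairs the weight-$(\,\cdot\,-3)!$ factor $(h-d)\rho_p$ against the weight-$(\,\cdot\,-2)!$ factor $h_p^3$. (One could instead avoid the loss by summing shell by shell and using $\sum_{\abs\gamma=k,\,\gamma\leq\alpha}{\alpha\choose\gamma}={\abs\alpha\choose k}$, but that is a different accounting from Lemma \ref{lemSum}, which is what you invoke.)
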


\begin{proof}[Proof of the lemma]
As in the previous lemma,   we have
      \begin{equation}\label{101}
        \forall~\gamma\leq \alpha,~\abs{\gamma}\geq 2,\quad
        \norm{\p^\gamma \inner{ h_p^3}}_0 \leq c_{13} L_1^{\gamma_1-2}L_2^{\gamma_2}[(\abs{\gamma}-2)!]^s.
      \end{equation}
      Next as the treatment for $\norm{\p^{\xi} h_q}_{0}$ in \reff{10}, we have
      \begin{eqnarray}\label{99}
        \forall~\gamma\leq \alpha,~\abs{\gamma}\geq 4,\quad \norm{\p^\gamma (h-d)}_0=\norm{\p^\gamma h}_0\leq L_1^{\gamma_1-3}L_2^{\gamma_2}[(\abs{\gamma}-4)!]^s.
      \end{eqnarray}
Moreover, since  $\rho(p)\in G^s([p_0, 0])$,  then
using  \reff{Gev rho}
 gives
\begin{eqnarray*}
   \forall~\abs{\gamma}\geq 3,\quad \ \norm{\p^\gamma \rho_p}_0\leq
\left\{
\begin{array}{lll}
0,&\quad  \gamma_1\geq 1,\\
 \norm{\p_p^{\gamma_2+1} \rho}_0 \leq N^{\gamma_2+2}[(\gamma_2+1)!]^s \leq
 \tilde N^{\gamma_2}[(\gamma_2-3)!]^s,
 &\quad \gamma_1=0,
\end{array}
\right.
\end{eqnarray*}
with $\tilde N$ a constant depending only on $N$ and $s$.
Thus if we choose $L_1, L_2$ in such a
way that
\begin{equation}\label{l1l25}
L_2\geq L_1 \tilde N,
\end{equation}
then we have
 \begin{equation}\label{gam rhop}
 \forall~\gamma\leq \alpha,~\abs{\gamma}\geq 3,\quad\norm{\p^\gamma \rho_p}_0\leq  L_1^{\gamma_1-2}L_2^{\gamma_2}[(\abs{\gamma}-3)!]^s.
\end{equation}
Combining \reff{99} and \reff{gam rhop}, we can apply Lemma \ref{+stab}-\reff{conc5}, with $u=h-d$ and $v=\rho_p$, to conclude,
\begin{eqnarray}\label{98}
  \forall~\gamma\leq \alpha,~\abs{\gamma}\geq 3,\quad\norm{\p^\gamma \inner{(h-d)\rho_p}}_0\leq c_{15} L_1^{\gamma_1-2}L_2^{\gamma_2}[(\abs{\gamma}-3)!]^s.
\end{eqnarray}

Now we write
\begin{eqnarray*}
\begin{split}
 &g\norm{\p^{\alpha}\inner{(h-d)\rho_p h_p^3}}_{0}\leq g\sum_{\abs\gamma\leq
   \abs\alpha}\frac{\abs\alpha !}{\abs\gamma!(\abs\alpha-\abs\gamma)!}\norm{\p^{ \gamma}(h-d)\rho_p }_0 \norm{\p^{ \alpha-\gamma} h_p^3}_0\\
  & \leq g\inner{\sum_{\stackrel{\gamma\leq\alpha}{0\leq\abs\gamma\leq2}}
   +\sum_{\stackrel{\gamma\leq\alpha}{3\leq\abs\gamma\leq\abs{\alpha}-2}}+\sum_{\stackrel{\gamma\leq\alpha}{\abs\alpha-1\leq\abs\gamma}}}\frac{\abs\alpha !}{\abs\gamma!(\abs\alpha-\abs\gamma)!}\norm{\p^{ \gamma}(h-d)\rho_p }_0 \norm{\p^{ \alpha-\gamma} h_p^3}_0.
   \end{split}
\end{eqnarray*}
Again arguing as the
treatment of $J_1-J_3$ in Lemma \ref{lem401}, we use \reff{101} and \reff{98} to conclude
\begin{eqnarray*}
  g\norm{\p^{\alpha}\inner{(h-d)\rho_p h_p^3}}_{0}\leq  c_4 L_1^{\alpha_1-2}L_2^{\alpha_2}[(\abs{\alpha}-2)!]^s,
\end{eqnarray*}
completing the proof of the lemma.
\end{proof}

We now continue the proof of Proposition \ref{prophpq}.  Combining \reff{5}
and the conclusions in the previous four lemmas, Lemma \ref{lem401}-Lemma
\ref{lem404},  we get
\begin{eqnarray*}
  \begin{split}
   \norm{\p_q^{m-n}\p_p^{n+2} h}_{0}&\leq \inner{(c_1+c_3+c_4)L_1^{-1}+c_2L_1L_2^{-1}}L_1^{\alpha_1-1}L_2^{\alpha_2} [(\abs{\alpha}-2)!]^{s}\\
   &\leq \frac{1}{2}L_1^{\alpha_1-1}L_2^{\alpha_2} [(\abs{\alpha}-2)!]^{s},
   \end{split}
\end{eqnarray*}
where in the last inequality we chose
\begin{equation}\label{Lgeq}
L_1\geq 4(c_1+c_3+c_4) , \quad L_2\geq 4c_2L_1.
\end{equation}
Then
we get the desired estimate \reff{+hqmn},  and thus the validity of
$(F_{m,n})$ and $(F_{m})$.
Summarizing  the relations \reff{l1l21}, \reff{l1l22},
\reff{l1l23}, \reff{l1l24}, \reff{l1l25} and \reff{Lgeq}, we can choose
\begin{eqnarray}\label{L1 L2}
  L_1\geq \max\Big\{L, \norm{h}_4+1,  4(c_1+c_3+c_4)\Big\} ~~ {\rm and}~~ L_2\geq \inner{8+4c_2+\tilde M+\tilde N}L_1,
\end{eqnarray}
with $\tilde M, \tilde N$ the  constants appearing respectively in \reff{l1l24} and \reff{l1l25},
to complete the proof of Proposition \ref{prophpq}.
\end{proof}

The rest of this section is devoted to

\begin{proof}[Proof of Lemma \ref{+stab}]
We refer to \cite[Lemma 4.4]{LW} for the proof of the conclusions \reff{conc1}-\reff{conc4}. Now we only prove the final statement \reff{conc5}.
 To  simplify
the notations, we use $\norm{\cdot}$  in this proof
to stand for the  norm $\norm{\cdot}_{C^0(\bar R)}$, and  use $a_j,j\geq 1,$ to denote different
suitable harmless constants larger than $1$,   which depend on the $C^{3,0}$-norm
of $u$ and $C^{2,0}$-norm of $v$,  but are independent of  the order $\alpha_0$ of
derivative.

Assume $u\in\mathcal A_3$ and $v\in\mathcal A_2$.  By Leibniz formula we have, for any  $\alpha\leq \alpha_0$ with
$\abs\alpha\geq 3$,
\begin{eqnarray*}
  \p^\alpha\inner{uv}=\sum_{0\leq \beta\leq\alpha} {\alpha\choose \beta}
   \inner{ \p^{\beta}u}   \inner{ \p^{\alpha-\beta}v}.
\end{eqnarray*}
Then
\begin{eqnarray*}
  \norm{\p^\alpha\inner{uv}} \leq  \sum_{0\leq\beta\leq\alpha} \frac{\abs\alpha!}{\abs\beta !\abs{\alpha-\beta}!}
   \norm{ \p^{\beta}u} \norm{
       \p^{\alpha-\beta}v}
= I_1+I_2+I_3,
\end{eqnarray*}
with
\begin{eqnarray*}
       I_1&=&  \sum_{\stackrel{0\leq\beta\leq\alpha}{\abs\beta\leq 3}}  \frac{\abs\alpha!}{\abs\beta !\abs{\alpha-\beta}!}
   \norm{\p^{\beta}u} \norm{
       \p^{\alpha-\beta}v},\\
   I_2&=& \sum_{\stackrel{0\leq\beta\leq\alpha}{4\leq \abs\beta\leq \abs\alpha-3}}  \frac{\abs\alpha!}{\abs\beta !\abs{\alpha-\beta}!}
   \norm{ \p^{\beta}u} \norm{
       \p^{\alpha-\beta}v},\\
I_3&=&  \sum_{\stackrel{0\leq\beta\leq\alpha}{\abs\beta\geq\abs\alpha-2}}  \frac{\abs\alpha!}{\abs\beta !\abs{\alpha-\beta}!}
   \norm{ \p^{\beta}u} \norm{
       \p^{\alpha-\beta}v}.
\end{eqnarray*}
Since $H_2\geq H_1$, direct computation shows that there exists $a_1>1$ such that
   \begin{eqnarray*}
    I_1+I_3 
    \leq a_1 \inner{\norm{u}_3+\norm{v}_2+1}^2 H_1^{\alpha_1-2}H_2^{\alpha_2}[(\abs\alpha-3)!]^s.
   \end{eqnarray*}
For $I_2$, which appears only when $\abs{\alpha}\geq 7$, we have
\begin{eqnarray*}
    I_2&\leq&  \sum_{\stackrel{0\leq\beta\leq\alpha}{4\leq \abs\beta\leq \abs\alpha-3}}\frac{\abs\alpha!}{\abs\beta !\abs{\alpha-\beta}!}
     H_1^{\beta_1-3}H_2^{\beta_2}\inner{(\abs\beta-4)!}^sH_1^{\alpha_1-\beta_1-2}H_2^{\alpha_2-\beta_2}\com{(\abs{\alpha}- \abs{\beta}-3)!}^s\\
  &\leq& a_2 \sum_{\stackrel{0\leq\beta\leq\alpha}{4\leq \abs\beta\leq \abs\alpha-3}}\frac{\abs\alpha!}{\abs\beta^{4} \abs{\alpha-\beta}^{3}}
     H_1^{\alpha_1-5}H_2^{\alpha_2}\com{(\abs\beta-4)!}^{s-1}\com{(\abs{\alpha}- \abs{\beta}-3)!}^{s-1}\\
  &\leq& a_2 H_1^{\alpha_1-5}H_2^{\alpha_2} \sum_{\stackrel{0\leq\beta\leq\alpha}{4\leq \abs\beta\leq \abs\alpha-3}} \frac{\abs\alpha!}{\abs\beta^{4} \abs{\alpha-\beta}^{3}}
     \com{(\abs{\alpha}-7)!}^{s-1}\\
  &\leq& a_2H_1^{\alpha_1-2}H_2^{\alpha_2}\com{(\abs\alpha-3)!}^{s} \sum_{\stackrel{0\leq\beta\leq\alpha}{4\leq \abs\beta\leq \abs\alpha-3}} \frac{\abs\alpha^{3}}{ \abs\beta^{4}\inner{\abs\alpha-\abs\beta}^{3}}\\
  &\leq& a_3 H_1^{\alpha_1-2}H_2^{\alpha_2}\com{(\abs\alpha-3)!}^{s},
\end{eqnarray*}
the last inequality following from Lemma \ref{lemSum}. In view of the
estimates for $I_1,I_2$ and $I_3$, we have,  for any  $\alpha\leq
\alpha_0$  with $\abs\alpha\geq 3$,
\begin{equation}\label{uvnorm}
  \norm{\p^{\alpha}(uv)}\leq a_4\inner{\norm{u}_3+\norm{v}_2+1}^2 H_1^{\alpha_1-2}H_2^{\alpha_2} \com{(\abs\alpha-3)!}^{s}
\end{equation}
with  $a_4= a_1+ a_3$.
Thus the conclusion \reff{conc5} follows if
we choose $\tilde c_*\geq a_4\inner{\norm{u}_3+\norm{v}_2+1}^2$.
\end{proof}

\section{Regularity of stratified water waves without surface tension}\label{sec5}
Without surface tension, that is, when $\sigma=0$, the flow is driven only by the gravity. Then the  equation \reff{EquPsi2} becomes
\begin{equation}\label{EquPsi22}
  \abs{\nabla
    \psi}^2+2g(y+d)\rho=Q,\qquad
  y=\eta(x).  \tag{3b$'$}
  \end{equation}
Correspondingly, the equation \reff{Equh2} becomes
\begin{equation}\label{Equh22}
  1+h_q^2+(2g\rho h-Q)h_p^2=0,\quad
  {\rm on~~} p=0. \tag{4b$'$}
\end{equation}

Proven in this section is the regularity property of all
the streamlines and the pseudo-stream function of stratified water waves without surface
tension.

\begin{theorem}\label{th3}
   Consider the free boundary problem \reff{EquPsi1}-\reff{EquPsi4} with
   \reff{EquPsi2} replaced by  the above \reff{EquPsi22}.  Suppose $\beta\in C^{0,\mu}([p_0,0])$ and $\rho\in
   C^{1,\mu}([p_0,0])$ with $\mu$ and $p_0$ given. Then
each streamline including the free surface $y=\eta(x)$ is a real-analytic curve.   If, in addition,
$\beta,\rho \in G^s([p_0, 0])$ with $s\geq 1$,   then $\psi(x,y) \in
G^s(\bar \Omega )$; in particular if $s=1$, i.e., $\beta$ and $\rho$ are analytic in $[p_0,
0]$, then the pseudo-stream function $\psi(x,y)$ is analytic in $\bar\Omega$.
\end{theorem}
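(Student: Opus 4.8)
The plan is to run the very same induction scheme used for the capillary--gravity case, exploiting that the interior equation \reff{Equh1} does not involve $\sigma$ and is therefore unchanged; only the boundary condition on $p=0$ is modified, from the second--order relation \reff{Equh2} to the first--order relation \reff{Equh22}. By Proposition \ref{reserv}, whose proof concerns only the hodograph change of variables and is insensitive to the boundary condition, it suffices to prove, for $h$ solving \reff{Equh1}, \reff{Equh22}, \reff{Equh3}, that $q\mapsto h(q,p)$ is analytic and, under the Gevrey hypothesis, that $h\in G^s(\bar R)$. Moreover, I claim the Gevrey statement reduces to the analyticity statement: the inductive step in Proposition \ref{prophpq}, namely the estimate \reff{+hqmn}, is obtained \emph{solely} by solving the interior equation \reff{Equh1} for $\p^\alpha h_{pp}$ and trading $p$--derivatives for $q$--derivatives, whereas the boundary condition enters only through the base case $(F_{m,0})$, which is precisely the analyticity of $q\mapsto h(q,p)$ furnished by Proposition \ref{propassu}. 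Hence, once the analog of Proposition \ref{propassu} is established for \reff{Equh22}, the entire proof of Proposition \ref{prophpq} carries over verbatim, and with it Theorem \ref{thm2} in the gravity regime.

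The crux is therefore to re-establish the estimate $(E_m)$, $\norm{\p_q^m h}_{2,\mu}\leq L^{m-1}(m-2)!$, with the new boundary condition. First I would differentiate \reff{Equh22} $m$ times in $q$; isolating the leading first--order terms produces, for $\phi=\p_q^m h$, the oblique derivative boundary operator
\[
  B_0(h)[\phi]=2h_q\phi_q+2h_p\inner{2g\rho h-Q}\phi_p+2g\rho h_p^2\,\phi,
\]
with the remaining lower--order products collected into a right--hand side $\tilde\varphi$ on $p=0$. The transversality (complementing) condition is immediate: on $p=0$ the relation \reff{Equh22} forces $2g\rho h-Q=-(1+h_q^2)/h_p^2<0$, so the coefficient $2h_p(2g\rho h-Q)$ of the normal derivative $\phi_p$ is strictly negative by the no--stagnation bound \reff{hp bounded}; thus $\set{A(h),B_0(h)}$ is a regular elliptic oblique--derivative problem, and the bootstrap giving $\p_q^k h\in C^{2,\mu}(\bar R)$ proceeds as in Remarks \ref{remReg} and \ref{+remReg} with the oblique--derivative Schauder theory in place of the second--order one. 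The interior operator $A(h)$ and its data $f_1+f_2$ are literally those of \reff{oprtAB}, so Steps~1--3 of the proof of Proposition \ref{propassu}, bounding $\norm{\p_q^m h}_0,\norm{f_1}_{0,\mu},\norm{f_2}_{0,\mu}$ by $CL^{m-2}(m-2)!$, apply without change.

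It then remains to bound the new boundary data, and this is the one step I expect to require care. The Schauder estimate for the oblique--derivative problem reads
\[
  \norm{\p_q^m h}_{2,\mu}\leq \mathcal C\inner{\norm{\p_q^m h}_0+\norm{f_1}_{0,\mu}+\norm{f_2}_{0,\mu}+\norm{\tilde\varphi}_{1,\mu}},
\]
so $\tilde\varphi$ must now be controlled in the $\norm{\cdot}_{1,\mu}$ norm rather than the $\norm{\cdot}_{0,\mu}$ norm used for the second--order capillary boundary term, the boundary condition having dropped one order of differentiation. This is exactly where the argument departs from the capillary case: the products making up $\tilde\varphi$, namely derivatives of $h_q^2$ and of $(2g\rho h-Q)h_p^2$ with neither factor carrying the full order, must be estimated in $C^{1,\mu}$. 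This is accommodated by Lemma \ref{stab} applied with $\ell=1$, together with the induction hypothesis $(E_j)$, which bounds $\norm{\p_q^j h}_{2,\mu}$ and hence the $\norm{\cdot}_{1,\mu}$ norms of $q$--derivatives of the first derivatives of $h$; combined with Lemma \ref{lemSum} for the binomial sums over the three ranges of the summation index as in Step~2, it yields $\norm{\tilde\varphi}_{1,\mu}\leq CL^{m-2}(m-2)!$.

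Because there is no surface--tension contribution, $\tilde\varphi$ is of strictly lower order than the capillary boundary term $\varphi_1+\varphi_2$ of \reff{defvp1}--\reff{defvp2}, so the main obstacle is not analytic but purely a matter of carrying the $C^{1,\mu}$ bookkeeping through the three summation ranges; no genuinely new phenomenon arises. Absorbing all harmless constants into a suitably large $L$ closes the induction and gives $(E_m)$, hence the analyticity of $q\mapsto h(q,p)$ and, by Proposition \ref{reserv}, of every streamline. The Gevrey statement, and in particular analyticity of $\psi$ when $s=1$, then follows from the reduction described in the first paragraph.
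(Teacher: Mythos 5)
Your proposal is correct and follows essentially the same route as the paper: reduce to the height function, keep the interior operator and $f_1,f_2$ unchanged, differentiate the first-order boundary condition \reff{Equh22} to get an oblique-derivative operator whose complementing condition follows from \reff{Equh22} and \reff{hp bounded}, apply the Schauder estimate with the boundary data now measured in $C^{1,\mu}$ (handled via Lemma \ref{stab} with $\ell=1$), and note that the Gevrey argument of Section \ref{sec4} carries over since the boundary condition enters only through the base case $(F_{m,0})$. The only discrepancies are cosmetic: your boundary operator and the sign of the coefficient of $\phi_p$ (strictly negative) are in fact the correct ones, the paper's $\tilde B(h)$ containing minor typos, and only nonvanishing of that coefficient matters for the complementing condition.
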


\begin{proof}

As before we only prove the corresponding regularity for height
function $h$ of the system \reff{Equh1}-\reff{Equh3} with \reff{Equh2}
replaced by the above \reff{Equh22}.   Since the arguments are nearly the same as those in presence of surface tension (Section
\ref{sec3} and Section \ref{sec4}),  we shall only give a sketch and indicate how to modify the analysis.

Repeating the arguments in Section \ref{sec4},  we can derive  the
second statement in Theorem \ref{th3},  without any difference. So we
only need to focus on the proof of the first statement on the analyticity of streamlines, which, as we will see below, is in fact simpler than its counterpart  in Section \ref{sec3}.

As in
Remark \ref{remReg} we may assume $\p_q^k h\in C^{2,\mu}(\bar R)$ for
any $k\in\N$.   Taking $m^{th}$-order derivative with respect to the $q$-variable on both sides of the equation \reff{Equh22} shows that the second equation in \reff{oprtAB} becomes
\begin{eqnarray*}
  \tilde B(h)[\p_q^m h]=\tilde\varphi_1+\tilde\varphi_2,\quad {\rm on}~ p=0,
\end{eqnarray*}
with the operator
\begin{eqnarray*}
  \tilde B(h)[\phi]=h_q\phi_q+(2g\rho h-Q)h_p\phi_p+2gh_p^2\phi
\end{eqnarray*}
and the right-hand side
\begin{eqnarray*}
  \tilde\varphi_1=-\frac{1}{2}\sum_{1\leq n\leq m-1}{m\choose n}(\p_q^n h_q)(\p_q^{m-n} h_q)-\frac{1}{2}(2g\rho h-Q)\sum_{1\leq n\leq m-1}{m\choose n}(\p_q^n h_p)(\p_q^{m-n} h_p),
  \end{eqnarray*}
  \begin{eqnarray*}
   \tilde\varphi_2=-\rho g\sum_{1\leq n\leq m-1}{m\choose n}(\p_q^n h)(\p_q^{m-n} h^2_p).
\end{eqnarray*}
The first and third equations in \reff{oprtAB} remain unchanged. Then,
as before, our aim is to show that the corresponding estimate as  in Proposition \ref{propassu} holds, that is, there exists a  constant $\tilde L\geq 1$ such
that for any $m\geq2,$
\begin{eqnarray*}
(\tilde E_m): \quad\quad \norm{\p^m_q h}_{2,\mu}\leq
\tilde L^{m-1}(m-2)!,
\end{eqnarray*}
and to this end the main point is to show that $(\tilde E_m)$ holds under the assumption that for
any $j$ with $2\leq j\leq m-1$, the following estimates
\begin{eqnarray}\label{tildIne}
  (\tilde E_j):\quad\quad \norm{\p^j_q h}_{2,\mu}\leq
  \tilde L^{j-1}(j-2)!
\end{eqnarray}
are already valid.

Since $h\in C^{2,\mu}(\bar R)$, the coefficients
of the operator $\tilde B(h)$
are in $C^{1,\mu}(\bar R)$. Moreover by the induction assumption
\reff{tildIne},
$\tilde\varphi_1$ and $\tilde\varphi_2$ are in $C^{1,\mu}(\bar R)$. Furthermore in this case the operator $\tilde B(h)$ satisfies the complementing condition in the sense of \cite{MR0125307} since the coefficient $(2gh-Q)h_p$ of $\phi_p$ is nonzero and satisfies
\begin{eqnarray*}
  (2g\rho h-Q)h_p=\frac{1+h_q^2}{h_p}\geq \frac{1}{\sup_{\bar R}h_p}\geq \delta
\end{eqnarray*}
in view of the boundary condition \reff{Equh22} and \reff{hp
  bounded}. As a result,
we can apply the Schauder estimate (see for instance \cite[Theorem 6.30]{MR1814364} and \cite{MR0125307})   to conclude
\begin{eqnarray}\label{shauder2}
  \norm{\p_q^m h}_{2,\mu}\leq \tilde {\mathcal C}\inner{\norm{\p_q^m h}_0+\sum_{i=1}^2\norm{f_i}_{0,\mu}+\sum_{i=1}^2\norm{\tilde\varphi_i}_{1,\mu}},
\end{eqnarray}
with $\tilde {\mathcal C}$ a constant independent of $m$,   and $f_i$, $i=1,2$, defined in
\reff{def f1}-\reff{def f2}.   Then we can use  the similar
arguments as in Section \ref{sec3} without any additional
difficulty,   to  conclude
\[
\tilde {\mathcal C}\inner{\norm{\p_q^m
  h}_0+\sum_{i=1}^2\norm{f_i}_{0,\mu}+\sum_{i=1}^2\norm{\tilde\varphi_i}_{1,\mu}}\leq
\tilde C_1 \tilde L^{m-2}(m-2)!,
\]
with  $\tilde C_1$ a constant independent of  $m$. Choosing $\tilde L\geq \tilde C_1$, we complete the proof of the proposition.
\end{proof}

\section*{Acknowledgements} The support of Foundation of WUST and support of Hubei Province Key Laboratory of SSMP are gratefully acknowledged.
\bibliographystyle{plain}

\end{document}